\numberwithin{equation}{subsection}
\newcommand{\sqsp}{\renewcommand{\baselinestretch}{1.15}\tiny\normalsize}
\newcommand{\nicearrow}{\SelectTips{cm}{10}}
\newtheorem{theorem}[subsection]{Theorem}
\newtheorem{lemma}[subsection]{Lemma}
\newtheorem{corollary}[subsection]{Corollary}
\theoremstyle{definition}
\newtheorem{definition}[subsection]{Definition}
\newtheorem{example}[subsection]{Example}
\newcommand{\bk}{\mathbf{k}}
\newcommand{\bracket}{[,...\,,]}
\newcommand{\nproduct}{(,...,)}
\DeclareMathOperator{\Hom}{Hom}
\begin{document}

\title{A Hom-associative analogue of $n$-ary Hom-Nambu algebras}
\author{Donald Yau}

\begin{abstract}
It is shown that every $n$-ary totally Hom-associative algebra with equal twisting maps yields an $n$-ary Hom-Nambu algebra via an $n$-ary version of the commutator bracket.  The class of $n$-ary totally Hom-associative algebras is shown to be closed under twisting by self-weak morphisms.  Every multiplicative $n$-ary totally Hom-associative algebra yields a sequence of multiplicative totally Hom-associative algebras of exponentially higher arities.  Under suitable conditions, an $n$-ary totally Hom-associative algebra gives an $(n-k)$-ary totally Hom-associative algebra.
\end{abstract}

\keywords{Hom-Nambu algebra, totally Hom-associative algebra, $n$-commutator bracket.}

\subjclass[2000]{17A40, 17A42, 17B81}

\address{Department of Mathematics\\
    The Ohio State University at Newark\\
    1179 University Drive\\
    Newark, OH 43055, USA}
\email{dyau@math.ohio-state.edu}

\date{\today}
\maketitle

\sqsp

\section{Introduction}

Nambu algebras play an important role in physics.  For example, Nambu mechanics \cite{nambu,tak} involves $n$-ary Nambu algebras \cite{filippov,plet}.  In these $n$-ary Nambu algebras, the $n$-ary compositions are derivations, a property called the $n$-ary Nambu identity (a.k.a. Filippov identity), which generalizes the Jacobi identity.  Lie triple systems \cite{jacobson1,lister}, which are ternary Nambu algebras with two further properties, can be used to solve the Yang-Baxter equation \cite{okubo}.  An $n$-ary Nambu algebra whose product is anti-symmetric is called an $n$-ary Nambu-Lie algebra (a.k.a. Filippov algebra).   Ternary Nambu-Lie algebras appear in the work of Bagger and Lambert \cite{bl} and many others on M-theory.  Other applications of $n$-ary Nambu algebras in physics are discussed in, e.g., \cite{ai,okubo,plet}.

Generalizations of $n$-ary Nambu(-Lie) algebras, called $n$-ary Hom-Nambu(-Lie) algebras, were introduced by Ataguema, Makhlouf, and Silvestrov in \cite{ams} .  In an $n$-ary Hom-Nambu(-Lie) algebra, the $n$-ary Nambu identity is replaced by the $n$-ary Hom-Nambu identity (see Definition \ref{def:homnambulie}), which involves $n-1$ linear twisting maps.  These twisting maps can be thought of as additional degrees of freedom or as deformation parameters.  An $n$-ary Nambu(-Lie) algebra can be regarded as an $n$-ary Hom-Nambu(-Lie) algebra in which the twisting maps are all equal to the identity map.  Hom-Nambu(-Lie) algebras are interesting even when the underlying algebras are Nambu(-Lie) algebras.  In fact, certain ternary Nambu-Lie algebras can be regarded as ternary Hom-Nambu-Lie algebras with non-identity twisting maps \cite{ams}.

Binary Hom-Nambu-Lie algebras are called Hom-Lie algebras, which originated in \cite{hls} in the study of $q$-deformations of the Witt and the Virasoro algebras.  The binary Hom-Nambu identity in this case is called the Hom-Jacobi identity \cite{ms}.  The associative counterparts of Hom-Lie algebras are Hom-associative algebra \cite{ms}.  Hom-Lie algebras are to Hom-associative algebras as Lie algebras are to associative algebras \cite{ms,yau}.

Let us recall some properties of $n$-ary Hom-Nambu(-Lie) algebras.  It is shown in \cite{ams} that $n$-ary Nambu(-Lie) algebras can be twisted along self-morphisms to yield $n$-ary Hom-Nambu(-Lie) algebras.  This twisting construction of $n$-ary Hom-Nambu(-Lie) algebras is a generalization of a result about $G$-Hom-associative algebras due to the author \cite{yau2}.  It is shown in \cite{ams1} that ternary Virasoro-Witt algebras can be $q$-deformed into ternary Hom-Nambu-Lie algebras.  It is shown in \cite{ams2} that a ternary Hom-Nambu-Lie algebra can be obtained from a Hom-Lie algebra together with a compatible linear map and a trace function.

Further properties of $n$-ary Hom-Nambu(-Lie) algebras were established by the author in \cite{yau13,yau14}.  A unique feature of Hom-type algebras is that they are closed under twisting by suitably defined self-morphisms.  In particular, it is shown in \cite{yau13} that the category of $n$-ary Hom-Nambu(-Lie) algebras is closed under twisting by self-weak morphisms.  Starting with an $n$-ary Nambu(-Lie) algebra, this closure property reduces to the twisting construction for $n$-ary Hom-Nambu(-Lie) algebras in \cite{ams}.  Moreover, it is proved in \cite{yau13} that every multiplicative $n$-ary Hom-Nambu algebra yields a sequence of Hom-Nambu algebras of exponentially higher arities.  It is also shown in \cite{yau13} that, under suitable conditions, an $n$-ary Hom-Nambu(-Lie) algebra reduces to an $(n-k)$-ary Hom-Nambu(-Lie) algebra.

Hom-Jordan and Hom-Lie triple systems were defined in \cite{yau13} as Hom-type generalizations of Jordan and Lie triple systems \cite{jacobson1,lister,meyberg}.  A Hom-Lie triple system is automatically a ternary Hom-Nambu algebra, but it is usually not a ternary Hom-Nambu-Lie algebra because its ternary product is not assumed to be anti-symmetric.  It is proved in \cite{yau13} that Hom-Lie triple systems, and hence ternary Hom-Nambu algebras, can be obtained from Hom-Jordan triple systems, ternary totally Hom-associative algebras \cite{ams}, multiplicative Hom-Lie algebras, and Hom-associative algebras.

Furthermore, it is proved in \cite{yau14} that multiplicative Hom-Jordan algebras \cite{yau12} have underlying Hom-Jordan triple systems, and hence also ternary Hom-Nambu algebras.  Combined with results from \cite{yau13}, this implies that every multiplicative Hom-Jordan algebra gives rise to a sequence of Hom-Nambu algebras of arities $2^{k+1} + 1$.  As in the classical case, a major source of Hom-Jordan algebras is the class of Hom-alternative algebras, which were defined in \cite{mak}.  It is proved in \cite{yau12} that multiplicative Hom-alternative algebras are Hom-Jordan admissible.  Therefore, every multiplicative Hom-alternative algebra also gives rise to a sequence of Hom-Nambu algebras of arities $2^{k+1} + 1$.  Finally, the class of $n$-ary Hom-Nambu-Lie algebras is extended to the class of $n$-ary Hom-Maltsev algebras in \cite{yau14}.


The main purpose of this paper is to study a Hom-associative analogue of $n$-ary Hom-Nambu algebras.  The basic motivation is that the $n$-ary Hom-Nambu identity is an $n$-ary version of the Hom-Jacobi identity.  As is well-known, the commutator bracket of an associative algebra satisfies the Jacobi identity.  Likewise, the commutator bracket of a Hom-associative algebra satisfies the Hom-Jacobi identity \cite{ms}.  Therefore, it is natural to ask the following question.
\begin{quote}
Is there an $n$-ary version of a Hom-associative algebra that gives rise to an $n$-ary Hom-Nambu algebra via an $n$-ary version of the commutator bracket?
\end{quote}
One main result of this paper is an affirmative answer to this question when the twisting maps are equal (Theorem \ref{thm:commutator}).  The relevant Hom-associative type objects are the $n$-ary totally Hom-associative algebras defined in \cite{ams}, which generalize $n$-ary totally associative algebras.  The relevant commutator bracket is what we call the $n$-commutator bracket (Definition \ref{def:nbracket}), which involves $2^{n-1}$ terms.  Restricting to the special case where all the twisting maps are equal to the identity map, this result implies that every $n$-ary totally associative algebra yields an $n$-ary Nambu algebra via the $n$-commutator bracket.

Low-dimensional cases of the $n$-commutator bracket have been used elsewhere.  In particular, the $2$-commutator bracket is the usual commutator.  The $3$-commutator bracket was used by the author in \cite{yau13} to show that a ternary totally Hom-associative algebra with equal twisting maps yields a ternary Hom-Nambu algebra.  We should point out that our $n$-commutator bracket is not the same as the totally anti-symmetrized $n$-ary commutator in \cite{ap,bp}.  Moreover, the $n$-commutator bracket is not anti-symmetric when $n \geq 3$.  Therefore, the $n$-ary Hom-Nambu algebras arising from $n$-ary totally Hom-associative algebras via the $n$-commutator bracket are usually not $n$-ary Hom-Nambu-Lie algebras.

A description of the rest of this paper follows.

In section \ref{sec:homass} we observe that the class of $n$-ary totally Hom-associative algebras is closed under twisting by self-weak morphisms (Theorem \ref{thm:closure}).  The corresponding closure property for $n$-ary Hom-Nambu(-Lie) and $n$-ary Hom-Maltsev algebras can be found in \cite{yau13} and \cite{yau14}, respectively.  A special case of Theorem \ref{thm:closure} says that each multiplicative $n$-ary totally Hom-associative algebra gives rise to a sequence of multiplicative $n$-ary totally Hom-associative algebras by twisting along its own twisting map (Corollary \ref{cor2:closure}).  We obtain Theorem 3.6 in \cite{ams} as another special case of Theorem \ref{thm:closure}.  It says that $n$-ary totally associative algebras can be twisted along self-morphisms to yield multiplicative $n$-ary totally Hom-associative algebras  (Corollary \ref{cor3:closure}).  Section \ref{sec:homass} ends with several examples of $n$-ary totally Hom-associative algebras.

In section \ref{sec:higher} we study how totally Hom-associative algebras of different arities are related.  First, we show that every multiplicative $n$-ary totally Hom-associative algebra yields a sequence of multiplicative totally Hom-associative algebras of exponentially higher arities, namely, $2^k(n-1) + 1$ for $k \geq 0$ (Corollary \ref{cor1:higher}). The transition from Hom-associative algebras to ternary totally Hom-associative algebras was proved in \cite{yau13}.  One major difference between the $n = 2$ case in \cite{yau13} and the $n \geq 3$ case here is that the latter requires multiplicativity while the former does not.  Second, we show that under suitable conditions an $n$-ary totally Hom-associative algebra reduces to an $(n-k)$-ary totally Hom-associative algebra (Corollary \ref{cor1:lower}).  The corresponding results for Hom-Nambu algebras can be found in \cite{yau13}.

In section \ref{sec:homassnambu} we define the $n$-commutator and show that every $n$-ary totally Hom-associative algebra with equal twisting maps yields an $n$-ary Hom-Nambu algebra via the $n$-commutator (Theorem \ref{thm:commutator}).  The special case of this result when $n=2$ is Proposition 1.6 in \cite{ms}.  The special case when $n=3$ is Corollary 4.3 in \cite{yau13}.  In these low dimensional cases, due to the relatively small number of terms involved, the binary and ternary Hom-Nambu identities can be shown by a direct computation with all the terms written out.  In the general case, a more systematic argument is needed because of the large number of terms in the Hom-Nambu identity.  In fact, the $n$-ary Hom-Nambu identity for the $n$-commutator bracket involves $2^{2n-2}(n+1)$ terms.

\section{Totally Hom-associative algebras}
\label{sec:homass}

In this section we observe that the class of $n$-ary totally Hom-associative algebras is closed under twisting by self-weak morphisms.  Some examples of $n$-ary totally Hom-associative algebras are then given.

\subsection{Conventions}

Throughout this paper we work over a fixed field $\bk$ of characteristic $0$.  If $V$ is a $\bk$-module and $f \colon V \to V$ is a linear map, then $f^n$ denotes the composition of $n$ copies of $f$ with $f^0 = Id$.  For $i \leq j$, elements $x_i,\ldots,x_j \in V$ and maps $f,f_k,\ldots,f_l \colon V \to V$ with $j-i=l-k$, we adopt the abbreviations
\begin{equation}
\label{xij}
\begin{split}
x_{i,j} &= (x_i,x_{i+1},\ldots,x_j),\\
f(x_{i,j}) &= (f(x_i), f(x_{i+1}),\ldots, f(x_j)),\\
f_{k,l}(x_{i,j}) &= (f_k(x_i),f_{k+1}(x_{i+1}),\ldots,f_l(x_j)).
\end{split}
\end{equation}
For $i > j$, the symbols $x_{i,j}$, $f(x_{i,j})$, and $f_{k,l}(x_{i,j})$ denote the empty sequence.  For a bilinear map $\mu \colon V^{\otimes 2} \to V$, we often write $\mu(x,y)$ as the juxtaposition $xy$.

Let us begin with the following basic definitions.

\begin{definition}
\label{def:nhomalgebra}
Let $n \geq 2$ be an integer.
\begin{enumerate}
\item
An \textbf{$n$-ary Hom-algebra} $(V,\nproduct,\alpha)$ with $\alpha=(\alpha_1,\ldots,\alpha_{n-1})$ \cite{ams} consists of a $\bk$-module $V$, an $n$-linear map $\nproduct \colon V^{\otimes n} \to V$, and linear maps $\alpha_i \colon V \to V$ for $i = 1, \ldots , n-1$, called the \textbf{twisting maps}.
\item
An $n$-ary Hom-algebra $(V,\nproduct,\alpha)$ is said to be \textbf{multiplicative} if (i) the twisting maps are all equal, i.e., $\alpha_1 = \cdots = \alpha_{n-1} = \alpha$, and (ii) $\alpha \circ \nproduct = \nproduct \circ \alpha^{\otimes n}$.
\item
A \textbf{weak morphism} $f \colon V \to U$ of $n$-ary Hom-algebras is a linear map of the underlying $\bk$-modules such that $f \circ \nproduct_V = \nproduct_U \circ f^{\otimes n}$.  A \textbf{morphism} of $n$-ary Hom-algebras is a weak morphism such that $f \circ (\alpha_i)_V = (\alpha_i)_U \circ f$ for $i = 1, \ldots n-1$ \cite{ams}.
\end{enumerate}
\end{definition}

For an $n$-ary Hom-algebra $V$ and elements $x_1,\ldots,x_n \in V$, using the abbreviations in \eqref{xij}, the $n$-ary product $(x_1,\ldots,x_n)$ will often be denoted by $(x_{1,n})$ below.  We sometimes omit the commas in the $n$-ary product $\nproduct$.

An $n$-ary Hom-algebra $V$ in which all the twisting maps are equal, as in the multiplicative case, will be denoted by $(V,\nproduct,\alpha)$, where $\alpha$ is the common value of the twisting maps.  An \textbf{$n$-ary algebra} in the usual sense is a $\bk$-module $V$ with an $n$-linear map $\nproduct \colon V^{\otimes n} \to V$.  We consider an $n$-ary algebra $(V,\nproduct)$ also as an $n$-ary Hom-algebra $(V,\nproduct,Id)$ in which all $n-1$ twisting maps are the identity map.  Also, in this case a weak morphism is the same thing as a morphism, which agrees with the usual definition of a morphism of $n$-ary algebras.

Let us now recall the definition of an $n$-ary totally Hom-associative algebra from \cite{ams}.


\begin{definition}
\label{def:nhomass}
Let $(A,\nproduct,\alpha)$ be an $n$-ary Hom-algebra.
\begin{enumerate}
\item
For $i \in \{1,\ldots,n-1\}$ define the \textbf{$i$th Hom-associator} $as^i_A \colon A^{\otimes 2n-1} \to A$ to be the $(2n-1)$-linear map
\[
\begin{split}
as^i_A(a_{1,2n-1}) &= (\alpha_{1,i-1}(a_{1,i-1}), (a_{i,i+n-1}), \alpha_{i,n-1}(a_{i+n,2n-1}))\\
&\relphantom{} - (\alpha_{1,i}(a_{1,i}),(a_{i+1,i+n}), \alpha_{i+1,n-1}(a_{i+n+1,2n-1}))
\end{split}
\]
for $a_1,\ldots,a_{2n-1} \in A$.
\item
An \textbf{$n$-ary totally Hom-associative algebra} is an $n$-ary Hom-algebra $A$ that satisfies \textbf{total Hom-associativity}
\[
as^i_A = 0
\]
for all $i \in \{1,\ldots,n-1\}$.
\end{enumerate}
\end{definition}

An $n$-ary totally Hom-associative algebra with $\alpha_i = Id$ for all $i$ is called an \textbf{$n$-ary totally associative algebra}.  In this case, $as^i_A = 0$ is referred to as \textbf{total associativity}.

When $n=2$ total Hom-associativity means
\[
((a_1a_2)\alpha(a_2)) = (\alpha(a_1)(a_2a_3)),
\]
which is the defining identity for Hom-associative algebras \cite{ms}.  When $n=3$ total Hom-associativity means
\begin{equation}
\label{totalhomass3}
((a_1a_2a_3),\alpha_1(a_4),\alpha_2(a_5))
= (\alpha_1(a_1),(a_2a_3a_4),\alpha_2(a_5))
= (\alpha_1(a_1),\alpha_2(a_2),(a_3a_4a_5)).
\end{equation}
In particular, if both twisting maps $\alpha_1$ and $\alpha_2$ are equal to the identity map, then \eqref{totalhomass3} is the defining identity for ternary rings \cite{lister2}.

To see that the category of $n$-ary totally Hom-associative algebras is closed under twisting by self-weak morphisms, we need the following observations.

\begin{lemma}
\label{lem:closure}
Let $(A,\nproduct,\alpha)$ be an $n$-ary Hom-algebra and $\beta \colon A \to A$ be a weak morphism.  Consider the $n$-ary Hom-algebra
\begin{equation}
\label{abeta}
A_\beta = (A,\nproduct_\beta = \beta\nproduct,\beta\alpha = (\beta\alpha_1,\ldots,\beta\alpha_{n-1})).
\end{equation}
Then the following statements hold.
\begin{enumerate}
\item
$\beta^2 as_A^i = as_{A_\beta}^i$ for $i \in \{1,\ldots,n-1\}$.
\item
If $A$ is multiplicative and $\beta\alpha = \alpha\beta$, then $A_\beta$ is also multiplicative.
\end{enumerate}
\end{lemma}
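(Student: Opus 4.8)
The plan is to prove both parts by direct manipulation of the products, letting the weak morphism property $\beta\circ\nproduct = \nproduct\circ\beta^{\otimes n}$ do all the real work.

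For part (1), I would fix $i$ and expand $as^i_{A_\beta}(a_{1,2n-1})$ from the definition of the associator, but now using the twisted product $\nproduct_\beta = \beta\nproduct$ as both the outer and the inner product and using the twisting maps $\beta\alpha_k$ in place of $\alpha_k$. Consider the first of the two terms (the second has the same shape and is treated identically). Its inner product becomes $\nproduct_\beta(a_{i,i+n-1}) = \beta\nproduct(a_{i,i+n-1})$, and each side entry $\alpha_k(a_j)$ becomes $\beta\alpha_k(a_j)$; hence every one of the $n$ arguments fed into the outer product carries a leading $\beta$, while the outer $\nproduct_\beta = \beta\nproduct$ supplies one further $\beta$. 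I would then factor the common $\beta$ out of the $n$ arguments by the weak morphism property $\nproduct\circ\beta^{\otimes n} = \beta\circ\nproduct$, which collapses the $n$ interior copies of $\beta$ into a single $\beta$ standing outside the product. Together with the $\beta$ from the outer $\nproduct_\beta$, this produces exactly $\beta^2$ times the corresponding term of $as^i_A$, and subtracting the two terms gives $as^i_{A_\beta} = \beta^2\, as^i_A$.

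For part (2), condition (i) of multiplicativity is immediate: since $A$ is multiplicative, $\alpha_1 = \cdots = \alpha_{n-1} = \alpha$, so the twisting maps of $A_\beta$ are all equal to the common value $\beta\alpha$. For condition (ii) I would verify $(\beta\alpha)\circ\nproduct_\beta = \nproduct_\beta\circ(\beta\alpha)^{\otimes n}$ by reducing both sides to a common expression. On the right, $\nproduct_\beta\circ(\beta\alpha)^{\otimes n} = \beta\nproduct\circ\beta^{\otimes n}\alpha^{\otimes n}$; applying first the weak morphism property $\nproduct\circ\beta^{\otimes n} = \beta\nproduct$ and then the multiplicativity identity $\nproduct\circ\alpha^{\otimes n} = \alpha\nproduct$ of $A$ turns this into $\beta^2\alpha\nproduct$. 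On the left, $(\beta\alpha)\circ\nproduct_\beta = \beta\alpha\beta\nproduct$. These two expressions coincide precisely when $\beta\alpha = \alpha\beta$, which is the standing hypothesis; this is the one and only point at which the commutation assumption enters.

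The computations themselves are routine, so the only genuine obstacle is notational: keeping the index ranges of the associator straight while tracking where each factor of $\beta$ originates and how the weak morphism property collapses them into $\beta^2$. Because the two terms of $as^i_A$ have identical structure, it suffices to carry out this bookkeeping once and invoke symmetry for the second term.
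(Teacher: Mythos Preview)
Your proposal is correct and follows exactly the approach the paper intends: the paper's proof consists of the single sentence ``Both assertions are immediate from the definitions,'' and your argument is precisely the routine unwinding of those definitions using the weak morphism identity $\nproduct\circ\beta^{\otimes n}=\beta\circ\nproduct$, the multiplicativity of $A$, and the commutation hypothesis $\alpha\beta=\beta\alpha$.
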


\begin{proof}
Both assertions are immediate from the definitions.
\end{proof}


The desired closure property is now an immediate consequence of Lemma \ref{lem:closure}.

\begin{theorem}
\label{thm:closure}
Let $(A,\nproduct,\alpha)$ be an $n$-ary totally Hom-associative algebra and $\beta \colon A \to A$ be a weak morphism.  Then $A_\beta$ in \eqref{abeta} is also an $n$-ary totally Hom-associative algebra.  Moreover, if $A$ is multiplicative and $\beta\alpha = \alpha\beta$, then $A_\beta$ is also multiplicative.
\end{theorem}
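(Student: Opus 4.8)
The plan is to obtain both statements as direct consequences of Lemma~\ref{lem:closure}, which has already isolated all of the computational content. First I would handle total Hom-associativity. By hypothesis $A$ satisfies $as^i_A = 0$ for every $i \in \{1,\ldots,n-1\}$, and $\beta$ is a weak morphism, so Lemma~\ref{lem:closure}(1) applies and gives
\[
as^i_{A_\beta} = \beta^2 \, as^i_A = 0
\]
for each such $i$. This is precisely total Hom-associativity for $A_\beta$, so $A_\beta$ is an $n$-ary totally Hom-associative algebra.

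For the ``moreover'' clause I would simply invoke Lemma~\ref{lem:closure}(2): under the added hypotheses that $A$ is multiplicative and $\beta\alpha = \alpha\beta$, the lemma asserts that $A_\beta$ is multiplicative. Concretely, the twisting maps of $A_\beta$ are all equal to the single map $\beta\alpha$ because those of $A$ are all equal to $\alpha$, and the compatibility $(\beta\alpha)\nproduct_\beta = \nproduct_\beta (\beta\alpha)^{\otimes n}$ is forced by combining the weak-morphism property of $\beta$, the multiplicativity of $A$, and the commutation $\beta\alpha = \alpha\beta$.

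There is no genuine obstacle at the level of the theorem itself: the only thing to verify is that the hypotheses of each part of Lemma~\ref{lem:closure} are in force, and they are (the weak-morphism assumption for part (1), and additionally multiplicativity together with the commutation relation for part (2)). The real work lives in Lemma~\ref{lem:closure}, whose two identities follow by unwinding the definition of the Hom-associator $as^i$ and the twisted structure $A_\beta$; once those are in hand, the present theorem is immediate.
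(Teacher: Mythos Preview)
Your proposal is correct and follows exactly the paper's approach: the theorem is stated as an immediate consequence of Lemma~\ref{lem:closure}, with part~(1) giving $as^i_{A_\beta} = \beta^2 as^i_A = 0$ and part~(2) handling the multiplicativity claim. Your write-up simply spells out what ``immediate consequence'' means, and there is nothing to add.
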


Let us now discuss some special cases of Theorem \ref{thm:closure}. If $A$ is a multiplicative $n$-ary Hom-algebra, then its twisting map is a morphism (and hence a weak morphism) on $A$.  Therefore, we have the following special case of Theorem \ref{thm:closure}.

\begin{corollary}
\label{cor1:closure}
Let $(A,\nproduct,\alpha)$ be a multiplicative $n$-ary totally Hom-associative algebra.  Then
\[
A_\alpha = (A,\nproduct_\alpha=\alpha\nproduct,\alpha^2)
\]
is also a multiplicative $n$-ary totally Hom-associative algebra.
\end{corollary}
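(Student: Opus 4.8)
The plan is to recognize this corollary as the special case $\beta = \alpha$ of Theorem \ref{thm:closure}, so the whole argument reduces to checking that $\alpha$ satisfies the hypotheses of that theorem and that the twisted data specialize as stated. The first thing I would verify is that the twisting map $\alpha$ is itself a weak morphism of $(A,\nproduct,\alpha)$. This is exactly condition (ii) in the definition of multiplicativity (Definition \ref{def:nhomalgebra}): since $A$ is multiplicative, $\alpha \circ \nproduct = \nproduct \circ \alpha^{\otimes n}$, which is precisely the defining relation $f \circ \nproduct = \nproduct \circ f^{\otimes n}$ for a weak morphism with $f = \alpha$. In fact $\alpha$ is a morphism, since multiplicativity forces $\alpha_1 = \cdots = \alpha_{n-1} = \alpha$ and hence $\alpha \circ \alpha_i = \alpha_i \circ \alpha$ trivially.

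Next I would specialize the twisted algebra of \eqref{abeta} to $\beta = \alpha$. By construction $A_\alpha = (A,\ \alpha\nproduct,\ (\alpha\alpha_1,\ldots,\alpha\alpha_{n-1}))$, and because all the original twisting maps equal $\alpha$, every entry of the new twisting datum is $\alpha^2$. This recovers exactly $A_\alpha = (A,\nproduct_\alpha = \alpha\nproduct,\alpha^2)$ as in the statement. Theorem \ref{thm:closure} then immediately yields that $A_\alpha$ is an $n$-ary totally Hom-associative algebra. For the multiplicativity claim I would invoke the second clause of Theorem \ref{thm:closure}, whose hypotheses are that $A$ is multiplicative (given) and that $\beta\alpha = \alpha\beta$; with $\beta = \alpha$ the latter is the tautology $\alpha\alpha = \alpha\alpha$, so multiplicativity of $A_\alpha$ follows at once.

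There is no genuine obstacle here, as the corollary is a pure bookkeeping consequence of the preceding theorem. The single point worth stating explicitly is the observation that multiplicativity of $A$ is exactly what promotes $\alpha$ from being just a twisting map to being a weak morphism, which is what licenses the choice $\beta = \alpha$ in Theorem \ref{thm:closure}. Everything else is a matter of substituting $\beta = \alpha$ into \eqref{abeta} and reading off that the paired twisting maps collapse to the single map $\alpha^2$.
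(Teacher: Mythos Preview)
Your proposal is correct and follows exactly the paper's own approach: the paper notes that for a multiplicative $n$-ary Hom-algebra the twisting map $\alpha$ is a morphism (hence a weak morphism), and then simply invokes Theorem \ref{thm:closure} with $\beta = \alpha$. Your additional remarks about how the twisting data specialize to $\alpha^2$ and why the commutativity hypothesis $\beta\alpha = \alpha\beta$ is automatic are accurate elaborations of details the paper leaves implicit.
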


Iterating Corollary \ref{cor1:closure} we obtain the following result, which says that every multiplicative $n$-ary totally Hom-associative algebra gives rise to a sequence of derived $n$-ary totally Hom-associative algebras.

\begin{corollary}
\label{cor2:closure}
Let $(A,\nproduct,\alpha)$ be a multiplicative $n$-ary totally Hom-associative algebra.  Then
\[
A_k = (A,\nproduct_k=\alpha^{2^k-1}\nproduct,\alpha^{2^k})
\]
is also a multiplicative $n$-ary totally Hom-associative algebra for each $k \geq 0$.
\end{corollary}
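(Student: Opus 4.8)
The plan is to prove the statement by induction on $k \geq 0$, using Corollary \ref{cor1:closure} as the engine for the inductive step. The guiding observation is that applying the construction of Corollary \ref{cor1:closure} to $A_k$ should reproduce exactly $A_{k+1}$; since that corollary guarantees its output stays in the class of multiplicative $n$-ary totally Hom-associative algebras, iterating it produces the whole sequence $A_k$ without ever leaving the class.

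First I would dispose of the base case $k=0$. Here $\alpha^{2^0-1} = \alpha^0 = Id$ and $\alpha^{2^0} = \alpha$, so $A_0 = (A,\nproduct,\alpha) = A$, which is a multiplicative $n$-ary totally Hom-associative algebra by hypothesis. For the inductive step I would assume that $A_k = (A,\alpha^{2^k-1}\nproduct,\alpha^{2^k})$ is a multiplicative $n$-ary totally Hom-associative algebra. Because $A_k$ is multiplicative, its twisting map $\alpha^{2^k}$ is a morphism (hence a weak morphism) on $A_k$, so Corollary \ref{cor1:closure} applies to $A_k$ and yields a multiplicative $n$-ary totally Hom-associative algebra $(A_k)_{\alpha^{2^k}}$. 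Its product is the twisting map of $A_k$ post-composed with the product of $A_k$, namely $\alpha^{2^k}\alpha^{2^k-1}\nproduct = \alpha^{2^{k+1}-1}\nproduct$, and its twisting map is the square $(\alpha^{2^k})^2 = \alpha^{2^{k+1}}$. Hence $(A_k)_{\alpha^{2^k}} = A_{k+1}$, which is therefore multiplicative $n$-ary totally Hom-associative, completing the induction.

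There is essentially no obstacle here beyond bookkeeping. The only substantive verification is the exponent arithmetic $2^k + (2^k-1) = 2^{k+1}-1$ for the new product and $2\cdot 2^k = 2^{k+1}$ for the new twisting map, both of which are immediate. All the genuine content — that total Hom-associativity and multiplicativity are preserved under the twist — is already packaged into Corollary \ref{cor1:closure} (and ultimately into Lemma \ref{lem:closure}), so the induction merely threads that closure property up the tower of derived algebras.
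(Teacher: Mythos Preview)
Your proof is correct and matches the paper's approach exactly: the paper simply states that the result follows by iterating Corollary \ref{cor1:closure}, which is precisely the induction you spell out, including the exponent bookkeeping $2^k + (2^k-1) = 2^{k+1}-1$ and $(\alpha^{2^k})^2 = \alpha^{2^{k+1}}$.
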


On the other hand, if we set $\alpha_i = Id$ for all $i$ in Theorem \ref{thm:closure}, then we obtain the following twisting result, which is Theorem 3.6 in \cite{ams}.  It says that $n$-ary totally Hom-associative algebras can be obtained from $n$-ary totally associative algebras and their morphisms.

\begin{corollary}
\label{cor3:closure}
Let $(A,\nproduct)$ be an $n$-ary totally associative algebra and $\beta \colon A \to A$ be a morphism.  Then
\[
A_\beta = (A,\nproduct_\beta=\beta\nproduct,\beta)
\]
is a multiplicative $n$-ary totally Hom-associative algebra.
\end{corollary}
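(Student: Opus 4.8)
The plan is to obtain Corollary \ref{cor3:closure} as a direct specialization of Theorem \ref{thm:closure}, so almost all of the work has already been done. First I would observe that an $n$-ary totally associative algebra $(A,\nproduct)$ is, by the convention established just after Definition \ref{def:nhomalgebra}, the same thing as an $n$-ary totally Hom-associative algebra $(A,\nproduct,\alpha)$ in which every twisting map $\alpha_i$ equals $Id$. This places us squarely in the hypothesis of Theorem \ref{thm:closure}.

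Next I would check that the map $\beta \colon A \to A$, which is assumed to be a morphism of $n$-ary algebras, is in particular a weak morphism, so that Theorem \ref{thm:closure} applies. Again by the convention recorded in the excerpt, for $n$-ary algebras (all twisting maps equal to $Id$) a weak morphism and a morphism are the same notion, namely a linear map $\beta$ satisfying $\beta \circ \nproduct = \nproduct \circ \beta^{\otimes n}$. Hence $\beta$ qualifies as the self-weak morphism required by the theorem.

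With these two identifications in place I would simply substitute $\alpha = (Id,\ldots,Id)$ into the construction \eqref{abeta}. The twisted product becomes $\nproduct_\beta = \beta\nproduct$, and the twisted family of twisting maps becomes $\beta\alpha = (\beta\, Id,\ldots,\beta\, Id) = (\beta,\ldots,\beta)$, i.e.\ all twisting maps equal to $\beta$. This is exactly the algebra $A_\beta = (A,\nproduct_\beta=\beta\nproduct,\beta)$ named in the statement. Theorem \ref{thm:closure} then guarantees that $A_\beta$ is an $n$-ary totally Hom-associative algebra.

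Finally I would argue multiplicativity. Since $\beta$ is a morphism, it commutes with the twisting maps of $A$, which are all $Id$, so the condition $\beta\alpha = \alpha\beta$ holds trivially; moreover the untwisted algebra $(A,\nproduct,Id)$ is multiplicative because $Id \circ \nproduct = \nproduct \circ Id^{\otimes n}$ and all its twisting maps coincide. The multiplicativity clause of Theorem \ref{thm:closure} then yields that $A_\beta$ is multiplicative. There is no genuine obstacle here: the only point requiring a moment's care is the bookkeeping of the two conventions (an ordinary $n$-ary algebra as a Hom-algebra with identity twisting maps, and the coincidence of weak morphisms with morphisms in that case), after which everything is a substitution into the already-proved Theorem \ref{thm:closure}.
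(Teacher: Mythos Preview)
Your proposal is correct and mirrors the paper's own approach: the paper derives Corollary \ref{cor3:closure} simply by setting all $\alpha_i = Id$ in Theorem \ref{thm:closure}, with no additional argument beyond this specialization. Your careful verification that a morphism of $n$-ary algebras is a weak morphism and that the multiplicativity clause applies is exactly the bookkeeping the paper leaves implicit.
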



The rest of this section contains examples of $n$-ary totally Hom-associative algebras.

\begin{example}
\label{ex1}
Let $A$ be an associative algebra, $f \colon A \to A$ be an algebra morphism, and $\zeta \in \bk$ be a primitive $n$th root of unity.  Then the $\zeta$-eigenspace of $f$,
\begin{equation}
\label{afzeta}
A(f,\zeta) = \{a \in A \colon f(a) = \zeta a\},
\end{equation}
is an $(n+1)$-ary totally associative algebra under the $(n+1)$-ary product
\begin{equation}
\label{n1product}
(a_1,\ldots,a_{n+1}) = a_1\cdots a_{n+1}.
\end{equation}
Let $\alpha \colon A \to A$ be an algebra morphism such that $\alpha f = f\alpha$.  Then $\alpha$ restricts to a morphism of $(n+1)$-ary totally associative algebras on $A(f,\zeta)$.  By Corollary \ref{cor3:closure} there is a multiplicative $(n+1)$-ary totally Hom-associative algebra
\[
A(f,\zeta)_\alpha = (A(f,\zeta),\nproduct_\alpha,\alpha),
\]
where
\[
(a_1,\ldots,a_{n+1})_\alpha = \alpha(a_1\cdots a_{n+1})
\]
for all $a_i \in A$.
\qed
\end{example}

\begin{example}
\label{ex2}
Let $A$ be the associative algebra over $\bk$ consisting of polynomials in $r \geq 2$ associative variables $X_1,\ldots,X_r$ with $0$ constant term.  Fix an integer $n \geq 2$.  Let $A_n$ denote the submodule of $A$ spanned by the homogeneous polynomials of degrees $1 \pmod{n}$.  Note that $A_n$ is actually an eigenspace $A(f,\zeta)$ as in \eqref{afzeta}, where $\zeta \in \bk$ is a primitive $n$th root of unity and $f \colon A \to A$ is determined by
\[
f(X_i) = \zeta X_i
\]
for all $i$.  In any case, $A_n$ is an $(n+1)$-ary totally associative algebra with the $(n+1)$-ary product in \eqref{n1product}.  When $n=2$ the ternary totally associative algebra $A_3$ is an example in \cite{lister2} (p.47).

For each $i \in \{1,\ldots,r\}$ let $m_i \geq 1$ be an integer with $m_i \equiv 1 \pmod{n}$.  Then the map $\alpha \colon A \to A$ determined by
\[
\alpha(X_i) = X_i^{m_i}
\]
for all $i$ is an algebra morphism that commutes with $f$.  As in Example \ref{ex1} there is a multiplicative $(n+1)$-ary totally Hom-associative algebra $(A_n)_\alpha = (A_n,\nproduct_\alpha,\alpha)$.

Note that $(A_n,\nproduct_\alpha)$ is not totally associative because
\[
((\underbrace{X_1\cdots X_1}_{n+1})_\alpha \underbrace{X_2 \cdots X_2}_{n})_\alpha = X_1^{m_1^2(n+1)}X_2^{m_2n},
\]
whereas
\[
(\underbrace{X_1 \cdots X_1}_{n} (X_1\underbrace{X_2\cdots X_2}_{n})_\alpha)_\alpha = X_1^{m_1(m_1+n)}X_2^{m_2^2n}.
\]
They are not equal, provided $m_1 > 1$ or $m_2 > 1$.
\qed
\end{example}

\begin{example}
\label{ex3}
Fix an integer $n \geq 2$, and let $V_1,\ldots,V_n$ be $\bk$-modules.  Consider the direct sum
\[
A = \bigoplus_{i=1}^n \Hom(V_i,V_{i+1}),
\]
where $V_{n+1} \equiv V_1$.  A typical element in $A$ is written as $\oplus f_i$, where $f_i \in \Hom(V_i,V_{i+1})$ for $i \in \{1,\ldots,n\}$.   One can visualize the element $\oplus f_i \in A$ as the braid diagram
\[
\nicearrow
\xymatrix{
V_1 \ar[ddr] & V_2 \ar[ddr] & V_3 \ar[ddr] & \cdots & V_{n-1}\ar[ddr] & V_n \ar[ddlllll]\\
& & & & & \\
V_1 & V_2 & V_3 & V_4 & \cdots & V_n
}
\]
on $n$ strands.  Define an $(n+1)$-ary product on $A$ by
\[
\left(\oplus f_i^1,\ldots,\oplus f_i^{n+1}\right) = \oplus F_i,
\]
where
\[
F_i = \underbrace{f_i^{n+1} \cdots f_1^{n-i+2}}_{i} \underbrace{f_n^{n-i+1}\cdots f_{i+1}^2 f_i^1}_{n-i+1}
\]
for each $i$.  Pictorially, the $(n+1)$-ary product $\oplus F_i$ is represented by the vertical composition of $n+1$ braid diagrams.  With this $(n+1)$-ary product, $A$ becomes an $(n+1)$-ary totally associative algebra.  When $n=2$ the ternary totally associative algebra $A$ is an example in \cite{lister2} (p.46).

For each $i \in \{1,\ldots,n\}$, let $\gamma_i \colon V_i \to V_i$ be a linear automorphism.  Define the map $\alpha \colon A \to A$ by
\[
\alpha(\oplus f_i) = \oplus \gamma_{i+1}^{-1}f_i\gamma_i,
\]
where $\gamma_{n+1} \equiv \gamma_1$.  Then $\alpha$ is an automorphism of $(n+1)$-ary totally associative algebras.  By Corollary \ref{cor3:closure} there is a multiplicative $n$-ary totally Hom-associative algebra
\[
A_\alpha = (A,\nproduct_\alpha = \alpha\nproduct,\alpha).
\]
Moreover, $(A,\nproduct_\alpha)$ is in general not an $n$-ary totally associative algebra.  For example, suppose $f^i_i \in \Hom(V_i,V_{i+1})$ for $i \in \{1,\ldots,n\}$ and $f^{n+1}_1 \in \Hom(V_1,V_2)$.  Then
\[
((f^1_1,\ldots,f^n_n,f^{n+1}_1)_\alpha, f^2_2,\ldots, f^{n+1}_1)_\alpha = \gamma_2^{-1}f_1^{n+1}\cdots f^2_2\gamma_2^{-1}F_1\gamma_1^2,
\]
whereas
\[
(f_1^1,\ldots,f^n_n, (f_1^{n+1},f^2_2,\ldots, f^{n+1}_1)_\alpha)_\alpha = \gamma_2^{-2}f_1^{n+1}\cdots f^2_2f_1^{n+1}\gamma_1 f^n_n \cdots f^1_1 \gamma_1.
\]
They are not equal in general.
\qed
\end{example}

\section{Totally Hom-associative algebras of different arities}
\label{sec:higher}

There are two main results in this section.  The first main result (Theorem \ref{thm:higher}) says that every multiplicative $n$-ary totally Hom-associative algebra yields a multiplicative $(2n-1)$-ary totally Hom-associative algebra.  The second main result (Theorem \ref{thm:lower}) says that under suitable conditions an $n$-ary totally Hom-associative algebra reduces to an $(n-1)$-ary totally Hom-associative algebra.  Both of these results can be iterated.

Here is the first main result of this section.  The Hom-Nambu analogue is discussed in \cite{yau13}.  Recall the abbreviations in \eqref{xij}.


\begin{theorem}
\label{thm:higher}
Let $(A,\nproduct,\alpha)$ be a multiplicative $n$-ary totally Hom-associative algebra.  Then
\[
A^1 = (A,\nproduct^{(1)},\alpha^2)
\]
is a multiplicative $(2n-1)$-ary totally Hom-associative algebra, where
\[
(a_{1,2n-1})^{(1)} = \left((a_{1,n}), \alpha(a_{n+1,2n-1})\right)
\]
for all $a_i \in A$.
\end{theorem}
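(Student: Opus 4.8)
The plan is to verify the two defining properties of $A^1$ separately: multiplicativity of $(A,\nproduct^{(1)},\alpha^2)$, and total Hom-associativity, i.e.\ $as^i_{A^1}=0$ for every $i\in\{1,\ldots,2n-2\}$. Throughout I write the given product as $(x_{1,n})$; multiplicativity of $A$ gives $\alpha^p((x_{1,n}))=(\alpha^p(x_{1,n}))$ for all $p\geq 0$, so $\alpha$ may be pushed inside the product at will. The only consequence of total Hom-associativity I would record at the outset is this: writing out $as^i_A=0$ in the multiplicative case shows that
\[
S_j(a_{1,2n-1})=(\alpha(a_{1,j-1}),(a_{j,j+n-1}),\alpha(a_{j+n,2n-1}))
\]
is independent of $j\in\{1,\ldots,n\}$, with $\nproduct^{(1)}$ being the instance $j=1$. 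Thus inside $\nproduct^{(1)}$ the inner $n$-ary bracket may be slid freely to any of its $n$ admissible positions; this freedom is the main tool.

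Multiplicativity of $A^1$ I would dispose of first, since it is immediate: applying $\alpha^2$ to $(a_{1,2n-1})^{(1)}=((a_{1,n}),\alpha(a_{n+1,2n-1}))$ and pushing it through both brackets with the commutation rule above produces exactly $\nproduct^{(1)}$ evaluated at $\alpha^2(a_1),\ldots,\alpha^2(a_{2n-1})$, which is the identity $\alpha^2\circ\nproduct^{(1)}=\nproduct^{(1)}\circ(\alpha^2)^{\otimes(2n-1)}$.

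For total Hom-associativity note that the two terms of $as^i_{A^1}(a_{1,4n-3})$ are
\[
T_i=(\alpha^2(a_{1,i-1}),(a_{i,i+2n-2})^{(1)},\alpha^2(a_{i+2n-1,4n-3}))^{(1)}
\]
and $T_{i+1}$, obtained by moving the nested $\nproduct^{(1)}$-block one slot to the right, so that $as^i_{A^1}=T_i-T_{i+1}$ and the goal is $T_i=T_{i+1}$. The plan is to expand each $T_i$ into a fourfold iterated original product by applying the rule $\nproduct^{(1)}=S_j$ twice (once for the outer, once for the inner $\nproduct^{(1)}$), choosing bracket positions freely, and then to use the sliding identity $S_j=S_{j+1}$ repeatedly to drive the result to the single left-nested staircase normal form
\[
\Phi=((((a_{1,n}),\alpha(a_{n+1,2n-1})),\alpha^2(a_{2n,3n-2})),\alpha^3(a_{3n-1,4n-3})),
\]
in which the block of arguments introduced at the $m$-th bracketing from the inside carries $\alpha^{m-1}$ for $m=1,2,3,4$. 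Since both $T_i$ and $T_{i+1}$ reduce to the same $\Phi$, they are equal and $as^i_{A^1}=0$ for all $i$.

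The main obstacle is the bookkeeping in this reduction rather than any single clever step, and two features must be tracked carefully. First, the powers of $\alpha$ increase by one with each additional enclosing bracket and must be reconciled each time a bracket is slid past flanking elements. Second, the position of the nested inner $\nproduct^{(1)}$-block relative to the split point of the outer product matters: a block occupying one of the first $n$ outer slots is absorbed into the bottom bracket of the outer product, whereas a block in one of the last $n-1$ slots instead acquires an extra $\alpha$. The systematic way to keep this under control is to exploit the bracket-position freedom at every level, always migrating the lowest bracket leftward one step at a time via $S_j=S_{j+1}$, so that each $T_i$ is driven to the common form $\Phi$ without a proliferation of cases.
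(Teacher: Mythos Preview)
Your proposal is correct and takes essentially the same approach as the paper: both arguments reduce the verification of $as^j_{A^1}=0$ to repeated uses of total Hom-associativity (your sliding identity $S_j=S_{j+1}$) together with multiplicativity to rearrange the four-fold nested products. The only difference is organizational: the paper proves the adjacent equality $T_j=T_{j+1}$ directly for each $j$, splitting into the three cases $j\le n-1$, $j=n$, $j\ge n+1$ (according to where the inner block sits relative to the outer split, exactly the dichotomy you note) and writing out case~(3) explicitly, whereas you phrase the same computation as driving every $T_i$ to the common left-nested normal form $\Phi=T_1$.
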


\begin{proof}
It is clear that $A^1$ is a multiplicative $(2n-1)$-ary Hom-algebra. We must show that its $j$th Hom-associator $as^j_{A^1}$ (Definition \ref{def:nhomass}) is equal to $0$, that is,
\begin{equation}
\label{aoneass}
\begin{split}
&\left(\alpha^2(a_{1,j-1}),(a_{j,j+2n-2})^{(1)}, \alpha^2(a_{j+2n-1,4n-3})\right)^{(1)} \\
&= \left(\alpha^2(a_{1,j}),(a_{j+1,j+2n-1})^{(1)}, \alpha^2(a_{j+2n,4n-3})\right)^{(1)}
\end{split}
\end{equation}
for all $j \in \{1,\ldots,2n-2\}$.  The condition \eqref{aoneass} is divided into three cases: (1) $j \leq n-1$, (2) $j=n$, and (3) $j \geq n+1$.  Note that the case $j \geq n+1$ does not occur if $n=2$. The three cases are proved similarly, so we only provide the details for case (3), which is the only case where multiplicativity is used. With $j \geq n+1$, using the multiplicativity and total Hom-associativity of $A$, we compute the left-hand side of \eqref{aoneass} as follows:
\[
\begin{split}
&\left(\alpha^2(a_{1,j-1}),(a_{j,j+2n-2})^{(1)}, \alpha^2(a_{j+2n-1,4n-3})\right)^{(1)}\\
&= \left((\alpha^2(a_{1,n})), \alpha^3(a_{n+1,j-1}), \alpha\left((a_{j,j+n-1}), \alpha(a_{j+n,j+2n-2})\right), \alpha^3(a_{j+2n-1,4n-3})\right)\\
&= \left((\alpha^2(a_{1,n})), \alpha^3(a_{n+1,j-1}), \alpha\left(\alpha(a_j), (a_{j+1,j+n}), \alpha(a_{j+n+1,j+2n-2})\right), \alpha^3(a_{j+2n-1,4n-3})\right)\\
&= \left(\alpha^2((a_{1,n})), \alpha^3(a_{n+1,j-1}), \left(\alpha^2(a_j), \alpha(a_{j+1,j+n}), \alpha^2(a_{j+n+1,j+2n-2})\right), \alpha^3(a_{j+2n-1,4n-3})\right)\\
&= \left(\alpha^2((a_{1,n})), \alpha^3(a_{n+1,j}), \left(\alpha(a_{j+1,j+n}), \alpha^2(a_{j+n+1,j+2n-1})\right), \alpha^3(a_{j+2n,4n-3})\right)\\
&= \left((\alpha^2(a_{1,n})), \alpha^3(a_{n+1,j}), \alpha((a_{j+1,j+n}), \alpha(a_{j+n+1,j+2n-1})), \alpha^3(a_{j+2n,4n-3})\right)
\end{split}
\]
The last expression above is equal to the right-hand side of \eqref{aoneass}, as desired.
\end{proof}


Applying Theorem \ref{thm:higher} repeatedly, we obtain the following result.  It says that every multiplicative $n$-ary totally Hom-associative algebra gives rise to a sequence of multiplicative totally Hom-associative algebras of exponentially higher arities.

\begin{corollary}
\label{cor1:higher}
Let $(A,\nproduct,\alpha)$ be a multiplicative $n$-ary totally Hom-associative algebra.  Define the $(2^k(n-1)+1)$-ary product $\nproduct^{(k)}$ inductively by setting $\nproduct^{(0)} = \nproduct$ and
\[
\left(a_{1,2^k(n-1)+1}\right)^{(k)} = \left(\left(a_{1,2^{k-1}(n-1)+1}\right)^{(k-1)}, \alpha^{2^{k-1}}\left(a_{2^{k-1}(n-1)+2, 2^k(n-1)+1}\right)\right)^{(k-1)}
\]
for $k \geq 1$.  Then
\[
A^k = (A,\nproduct^{(k)},\alpha^{2^k})
\]
is a multiplicative $(2^k(n-1)+1)$-ary totally Hom-associative algebra for each $k \geq 0$.
\end{corollary}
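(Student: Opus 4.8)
The plan is to induct on $k$, with Theorem \ref{thm:higher} serving as the single engine of the whole argument. The base case $k=0$ is immediate: by definition $A^0 = (A,\nproduct^{(0)},\alpha^{2^0}) = (A,\nproduct,\alpha)$, and $2^0(n-1)+1 = n$, so $A^0$ is the given multiplicative $n$-ary totally Hom-associative algebra. Hence all the content lives in the inductive step.

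For the inductive step I would fix $k \geq 1$ and assume that $A^{k-1} = (A,\nproduct^{(k-1)},\alpha^{2^{k-1}})$ is a multiplicative $m$-ary totally Hom-associative algebra, where $m = 2^{k-1}(n-1)+1$. The essential observation is that Theorem \ref{thm:higher}, though stated for a multiplicative $n$-ary algebra $(A,\nproduct,\alpha)$, is really a statement about an arbitrary multiplicative totally Hom-associative algebra of arbitrary arity: one simply reads the symbol $\alpha$ there as the common twisting map of the input (here $\alpha^{2^{k-1}}$) and the symbol $n$ there as $m$. The multiplicativity required to invoke the theorem—in particular that $\alpha^{2^{k-1}}$ commutes appropriately with $\nproduct^{(k-1)}$—is exactly part of the inductive hypothesis. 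Applying Theorem \ref{thm:higher} to $A^{k-1}$ therefore yields a multiplicative totally Hom-associative algebra of arity $2m-1$ with twisting map $(\alpha^{2^{k-1}})^2$.

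It then remains to check that these data coincide with those claimed for $A^k$. The arity arithmetic gives $2m-1 = 2(2^{k-1}(n-1)+1)-1 = 2^k(n-1)+1$, and the twisting map is $(\alpha^{2^{k-1}})^2 = \alpha^{2^k}$, as required. For the product, the output of Theorem \ref{thm:higher} applied to $A^{k-1}$ is, under the substitution of $m$ for $n$ and $\alpha^{2^{k-1}}$ for $\alpha$,
\[
(a_{1,2m-1}) \longmapsto \left((a_{1,m})^{(k-1)},\, \alpha^{2^{k-1}}(a_{m+1,2m-1})\right)^{(k-1)},
\]
which, upon inserting $m = 2^{k-1}(n-1)+1$, is precisely the recursion defining $\nproduct^{(k)}$. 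A brief index count confirms the slots match: the inner block $a_{1,m}$ carries $m = 2^{k-1}(n-1)+1$ entries, the twisted tail $a_{m+1,2m-1}$ carries $m-1 = 2^{k-1}(n-1)$ entries, so the outer operation has $1+(m-1)=m$ arguments, exactly the arity of $\nproduct^{(k-1)}$.

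Since every step reduces to an invocation of an already-proved theorem, there is no genuine analytic obstacle here; the only thing demanding care is the bookkeeping—verifying that the two separate index expressions appearing in the corollary's recursion agree with the single-step formula of Theorem \ref{thm:higher} after the replacement of $(n,\alpha)$ by $(m,\alpha^{2^{k-1}})$, and that the multiplicativity needed to apply the theorem is genuinely furnished by the inductive hypothesis rather than merely by the hypothesis on $A = A^0$. Once these identifications are in place, the induction closes and the result follows for all $k \geq 0$.
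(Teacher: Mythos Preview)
Your proposal is correct and is exactly the argument the paper has in mind: the paper simply says the corollary follows by ``applying Theorem \ref{thm:higher} repeatedly,'' and your induction on $k$ with the bookkeeping on arity, twisting map, and the product recursion is a faithful unpacking of that one-line justification.
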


For example, when $k=2$ we have:
\[
\begin{split}
\left(a_{1,4n-3}\right)^{(2)}
&= \left(\left(a_{1,2n-1}\right)^{(1)}, \alpha^2(a_{2n,4n-3})\right)^{(1)}\\
&= \left(\left(\left((a_{1,n}), \alpha(a_{n+1,2n-1})\right), \alpha^2(a_{2n,3n-2})\right), \alpha^3(a_{3n-1,4n-3})\right).
\end{split}
\]
When $k=3$, writing $x = \left(a_{1,4n-3}\right)^{(2)}$, we have:
\[
\begin{split}
\left(a_{1,8n-7}\right)^{(3)}
&= \left(x,\alpha^4(a_{4n-2,8n-7})\right)^{(2)}\\
&= \left(\left(\left((x,\alpha^4(a_{4n-2,5n-4})), \alpha^5(a_{5n-3,6n-5})\right), \alpha^6(a_{6n-4,7n-6})\right), \alpha^7(a_{7n-5,8n-7})\right).
\end{split}
\]
In general, $\nproduct^{(k)}$ involves an iterated composition of $2^k$ copies of the $n$-ary product $\nproduct$ and $n-1$ copies of $\alpha^i$ for each $i \in \{1,\ldots,2^k-1\}$.

Restricting to the case $\alpha = Id$ in Corollary \ref{cor1:higher}, we obtain the following construction result for higher arity totally associative algebras.

\begin{corollary}
\label{cor2:higher}
Let $(A,\nproduct)$ be an $n$-ary totally associative algebra.  Define the $(2^k(n-1)+1)$-ary product $\nproduct^{(k)}$ inductively by setting $\nproduct^{(0)} = \nproduct$ and
\[
\left(a_{1,2^k(n-1)+1}\right)^{(k)} = \left(\left(a_{1,2^{k-1}(n-1)+1}\right)^{(k-1)}, a_{2^{k-1}(n-1)+2, 2^k(n-1)+1}\right)^{(k-1)}
\]
for $k \geq 1$.  Then
\[
A^k = (A,\nproduct^{(k)})
\]
is a multiplicative $(2^k(n-1)+1)$-ary totally associative algebra for each $k \geq 0$.
\end{corollary}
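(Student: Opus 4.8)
The plan is to obtain this statement as the special case $\alpha = Id$ of Corollary \ref{cor1:higher}, so that essentially no new computation is required. The first step is to observe, as in the discussion following Definition \ref{def:nhomass}, that an $n$-ary totally associative algebra $(A,\nproduct)$ is exactly the $n$-ary totally Hom-associative algebra $(A,\nproduct,Id)$ whose twisting maps are all the identity. Such an algebra is automatically multiplicative: condition (i) of Definition \ref{def:nhomalgebra} holds because all twisting maps equal $Id$, and condition (ii) reads $Id \circ \nproduct = \nproduct \circ Id^{\otimes n}$, which is trivially true. Hence $(A,\nproduct)$ satisfies the hypotheses of Corollary \ref{cor1:higher}.

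Next I would substitute $\alpha = Id$ into the inductive definition of $\nproduct^{(k)}$ appearing in Corollary \ref{cor1:higher}. Because $\alpha^{2^{k-1}} = Id$ for every $k \geq 1$, the factor $\alpha^{2^{k-1}}$ applied to the tuple $a_{2^{k-1}(n-1)+2,\,2^k(n-1)+1}$ disappears, and the recursion collapses to
\[
\left(a_{1,2^k(n-1)+1}\right)^{(k)} = \left(\left(a_{1,2^{k-1}(n-1)+1}\right)^{(k-1)}, a_{2^{k-1}(n-1)+2,\,2^k(n-1)+1}\right)^{(k-1)},
\]
which is verbatim the recursion defining the product in the present statement. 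Thus the two inductively defined $(2^k(n-1)+1)$-ary products agree. Corollary \ref{cor1:higher} then yields that $A^k = (A,\nproduct^{(k)},\alpha^{2^k})$ is a multiplicative $(2^k(n-1)+1)$-ary totally Hom-associative algebra; since $\alpha^{2^k} = Id$, this is precisely the multiplicative $(2^k(n-1)+1)$-ary totally associative algebra $(A,\nproduct^{(k)})$ claimed.

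This argument is a formal specialization, and I do not anticipate a genuine obstacle. The one point deserving care is the verification that a totally associative algebra satisfies the multiplicativity hypothesis required by Corollary \ref{cor1:higher}, which is addressed above; once that is in place, the matching of the two product recursions under $\alpha = Id$ and the transfer of the conclusion (including its multiplicativity clause) are purely formal.
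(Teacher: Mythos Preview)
Your proposal is correct and follows exactly the paper's approach: the paper introduces this corollary with the sentence ``Restricting to the case $\alpha = Id$ in Corollary~\ref{cor1:higher}, we obtain the following construction result,'' which is precisely the specialization you carry out.
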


The following result is the second main result of this section.  It gives sufficient conditions under which an $n$-ary totally Hom-associative algebra reduces to an $(n-1)$-ary totally Hom-associative algebra.  This result is the totally Hom-associative analogue of results due to Pozhidaev \cite{poz} and Filippov \cite{filippov} about $n$-ary Maltsev algebras and Nambu-Lie algebras, respectively.  The $n$-ary Hom-Nambu(-Lie) and Hom-Maltsev analogues can be found in \cite{yau13} and \cite{yau14}, respectively.

\begin{theorem}
\label{thm:lower}
Let $(A,\nproduct,\alpha)$ be an $n$-ary totally Hom-associative algebra with $n \geq 3$.  Suppose $a \in A$ satisfies
\begin{enumerate}
\item
$\alpha_{n-1}(a) = a$, and
\item
$(x_{1,n-1},a) = (x_{1,n-2},a,x_{n-1})$ for all $x_i \in A$.
\end{enumerate}
Then
\[
A_1 = (A,\nproduct',\alpha')
\]
is an $(n-1)$-ary totally Hom-associative algebra, where
\[
(x_{1,n-1})' = (x_{1,n-1},a) \quad\text{and}\quad
\alpha' = (\alpha_1,\ldots,\alpha_{n-2})
\]
for all $x_i \in A$.  Moreover, if $A$ is multiplicative, then so is $A_1$.
\end{theorem}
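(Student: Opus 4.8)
The plan is to deduce total Hom-associativity of $A_1$ from that of $A$, using the two hypotheses on $a$ to absorb the auxiliary copy of $a$ carried by the product $\nproduct'$. Fix $i \in \{1,\ldots,n-2\}$ and write out the $i$th Hom-associator $as^i_{A_1}$ of the $(n-1)$-ary Hom-algebra $A_1$ (Definition \ref{def:nhomass}) on elements $b_1,\ldots,b_{2n-3}$, recalling that $\alpha' = (\alpha_1,\ldots,\alpha_{n-2})$. Expanding each occurrence of $\nproduct'$ as the $n$-ary product of $A$ with $a$ appended turns both sides of $as^i_{A_1}=0$ into $n$-ary products of $A$ built from $2n-1$ leaves: the $b_j$, one copy of $a$ inside the inner bracket, and one copy of $a$ in the last slot of the outer bracket. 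The first step is to record these two expansions, keeping careful track of which twisting map $\alpha_k$ sits on each leaf.

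First I would treat the left-hand side. After expansion it reads
\[
\left(\alpha_{1,i-1}(b_{1,i-1}),\, (b_{i,i+n-2},a),\, \alpha_{i,n-2}(b_{i+n-1,2n-3}),\, a\right),
\]
an $n$-ary product of $A$ whose last slot is the bare element $a$. By hypothesis (1) I may replace this $a$ by $\alpha_{n-1}(a)$. Identifying the $2n-1$ arguments in the evident way — so that the inner bracket occupies positions $i,\ldots,i+n-1$, its last entry being the inner $a$, and the trailing $\alpha_{n-1}(a)$ is the last leaf — this expression becomes exactly the left-hand side of the $n$-ary Hom-associator $as^i_A$. Total Hom-associativity of $A$ then rewrites it as the right-hand side of $as^i_A$, in which the inner bracket has moved one step to the right.

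Reading off that right-hand side, it differs from the expanded right-hand side of $as^i_{A_1}=0$ in only two cosmetic ways: the trailing outer slot carries $\alpha_{n-1}(a)$ rather than $a$, and inside the inner bracket the final two entries appear as $(\ldots,a,b_{i+n-1})$ rather than $(\ldots,b_{i+n-1},a)$. The first is undone by hypothesis (1) once more, and the second is exactly hypothesis (2): taking $x_j = b_{i+j}$ there gives $(b_{i+1,i+n-1},a) = (b_{i+1,i+n-2},a,b_{i+n-1})$, which restores the inner bracket to $(b_{i+1,i+n-1},a)$. After these two rewritings the expression coincides with the right-hand side of $as^i_{A_1}=0$, so $as^i_{A_1}=0$ for every $i$ and $A_1$ is $(n-1)$-ary totally Hom-associative.

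The multiplicative clause is then immediate: if $A$ is multiplicative then all $\alpha_k$ equal a single $\alpha$, so $\alpha' = (\alpha,\ldots,\alpha)$ has equal entries, while hypothesis (1) forces $\alpha(a)=a$; hence $\alpha((x_{1,n-1})') = \alpha\left((x_{1,n-1},a)\right) = (\alpha(x_{1,n-1}),a) = (\alpha(x_{1,n-1}))'$, which is multiplicativity of $A_1$. I expect the only real difficulty to be bookkeeping: one must check that the leaf-and-twisting-map identification of the second step is exact simultaneously for every $i \in \{1,\ldots,n-2\}$ — in particular that total Hom-associativity always demands $\alpha_{n-1}$ on the trailing $a$ (supplied by (1)) and that the single misplaced pair inside the inner bracket is always the one corrected by (2). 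Once this is verified the argument is uniform and needs no case division.
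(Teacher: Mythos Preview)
Your proposal is correct and follows essentially the same route as the paper: expand both $\nproduct'$ to insert the two copies of $a$, use hypothesis~(1) to place $\alpha_{n-1}$ on the outer $a$, apply $as^i_A = 0$ to shift the inner bracket, then use hypotheses~(2) and~(1) to swap the inner $(a,b_{i+n-1})$ back to $(b_{i+n-1},a)$ and strip the $\alpha_{n-1}$. The paper simply declares the multiplicativity clause ``clear'' where you spell out $\alpha(a)=a$, but the arguments are otherwise identical.
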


\begin{proof}
The multiplicativity assertion is clear.  We must show that for $j \in \{1,\ldots,n-2\}$, the $j$th Hom-associator $as^j_{A_1}$ is equal to $0$, that is,
\begin{equation}
\label{ahomass}
\begin{split}
&\left(\alpha_{1,j-1}(x_{1,j-1}), (x_{j,j+n-2})', \alpha_{j,n-2}(x_{j+n-1,2n-3})\right)'\\
&= \left(\alpha_{1,j}(x_{1,j}), (x_{j+1,j+n-1})', \alpha_{j+1,n-2}(x_{j+n,2n-3})\right)'.
\end{split}
\end{equation}
Using the assumptions on $a$ and the total Hom-associativity of $A$, we compute the left-hand side of \eqref{ahomass} as follows:
\[
\begin{split}
&\left(\alpha_{1,j-1}(x_{1,j-1}), (x_{j,j+n-2})', \alpha_{j,n-2}(x_{j+n-1,2n-3})\right)'\\
&= \left(\alpha_{1,j-1}(x_{1,j-1}), (x_{j,j+n-2},a), \alpha_{j,n-2}(x_{j+n-1,2n-3}), \alpha_{n-1}(a)\right)\\
&= \left(\alpha_{1,j}(x_{1,j}), (x_{j+1,j+n-2},a,x_{j+n-1}), \alpha_{j+1,n-2}(x_{j+n,2n-3}), \alpha_{n-1}(a)\right)\\
&= \left(\alpha_{1,j}(x_{1,j}), (x_{j+1,j+n-1},a), \alpha_{j+1,n-2}(x_{j+n,2n-3}), a\right).\\
\end{split}
\]
The last expression above is equal to the right-hand side of \eqref{ahomass}, as desired.
\end{proof}

Note that in Theorem \ref{thm:lower} the first assumption about $a$ is automatically satisfied if $\alpha_{n-1}$ is the identity map on $A$.  On the other hand, the second assumption on $a$ is automatically satisfied if the $n$-ary product $\nproduct$ is commutative in the last two variables.

Applying Theorem \ref{thm:lower} repeatedly, we obtain the following result.  It gives sufficient conditions under which an $n$-ary totally Hom-associative algebra reduces to an $(n-k)$-ary totally Hom-associative algebra.

\begin{corollary}
\label{cor1:lower}
Let $(A,\nproduct,\alpha)$ be an $n$-ary totally Hom-associative algebra with $n \geq 3$.  Suppose there exist $a_1,\ldots,a_k \in A$ for some $k \leq n-2$ such that
\begin{enumerate}
\item
$\alpha_{n-i}(a_i) = a_i$ for all $i \in \{1,\ldots,k\}$, and
\item
$(x_{1,n-i},a_i,a_{i-1},\ldots,a_1) = (x_{1,n-i-1},a_i,x_{n-i}, a_{i-1},\ldots,a_1)$ for all $i \in \{1,\ldots,k\}$ and $x_j \in A$.
\end{enumerate}
Then
\[
A_k = (A,\nproduct^k,(\alpha_1,\ldots,\alpha_{n-1-k}))
\]
is an $(n-k)$-ary totally Hom-associative algebra, where
\[
(x_{1,n-k})^k = (x_{1,n-k},a_k,a_{k-1},\ldots,a_1)
\]
for all $x_j \in A$.  Moreover, if $A$ is multiplicative, then so is $A_k$.
\end{corollary}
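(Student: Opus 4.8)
The plan is to argue by induction on $k$, using Theorem \ref{thm:lower} as the single reduction step and feeding its output back in as its input. I would set $B_0 = A$ and, for $1 \leq i \leq k$, let $B_i$ be the algebra produced by applying Theorem \ref{thm:lower} to $B_{i-1}$ with the distinguished element $a_i$. The inductive claim is that each $B_i$ is an $(n-i)$-ary totally Hom-associative algebra whose product is $(x_{1,n-i})^i = (x_{1,n-i}, a_i, a_{i-1}, \ldots, a_1)$ and whose twisting maps are $(\alpha_1, \ldots, \alpha_{n-1-i})$; granting this, $B_k$ is precisely the asserted algebra $A_k$. Since $B_{i-1}$ has arity $n-i+1$, the arity hypothesis $n-i+1 \geq 3$ required to invoke Theorem \ref{thm:lower} reads $i \leq n-2$, which holds for every $i \leq k$ because $k \leq n-2$; this is exactly where the bound on $k$ is used.

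For the base case $i = 1$, I would apply Theorem \ref{thm:lower} to $A$ with $a = a_1$. Its first hypothesis $\alpha_{n-1}(a_1) = a_1$ and its second hypothesis $(x_{1,n-1}, a_1) = (x_{1,n-2}, a_1, x_{n-1})$ are verbatim conditions (1) and (2) of the corollary for $i = 1$, so the theorem applies and yields $B_1$ with product $(x_{1,n-1})^1 = (x_{1,n-1}, a_1)$ and twisting maps $(\alpha_1, \ldots, \alpha_{n-2})$, matching the claim.

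For the inductive step I would assume $B_{i-1}$ has the stated form, so its product is $(y_{1,n-i+1})^{i-1} = (y_{1,n-i+1}, a_{i-1}, \ldots, a_1)$ and its twisting maps are $(\alpha_1, \ldots, \alpha_{n-i})$, and then verify the two hypotheses of Theorem \ref{thm:lower} for $B_{i-1}$ with $a = a_i$. The first requires the last twisting map of $B_{i-1}$, namely $\alpha_{n-i}$, to fix $a_i$, which is condition (1) for index $i$. The second requires
\[
(x_{1,n-i}, a_i)^{i-1} = (x_{1,n-i-1}, a_i, x_{n-i})^{i-1},
\]
and unfolding the definition of the $B_{i-1}$-product on both sides rewrites this as
\[
(x_{1,n-i}, a_i, a_{i-1}, \ldots, a_1) = (x_{1,n-i-1}, a_i, x_{n-i}, a_{i-1}, \ldots, a_1),
\]
which is exactly condition (2) for index $i$. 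Theorem \ref{thm:lower} then delivers $B_i$ with product $(x_{1,n-i})^i = (x_{1,n-i}, a_i)^{i-1} = (x_{1,n-i}, a_i, a_{i-1}, \ldots, a_1)$ and twisting maps $(\alpha_1, \ldots, \alpha_{n-1-i})$, closing the induction.

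I expect the main (indeed the only) genuine obstacle to be this bookkeeping translation in the inductive step: the second hypothesis of Theorem \ref{thm:lower} is phrased through the product of $B_{i-1}$, so one must carefully unfold the nested definition of that product to recognize the condition as hypothesis (2) of the corollary; everything else is routine tracking of arities, indices, and the truncation of the twisting-map list. Finally, the multiplicativity clause of Theorem \ref{thm:lower} propagates multiplicativity from $B_{i-1}$ to $B_i$ at each step, so if $A$ is multiplicative then so is $A_k$.
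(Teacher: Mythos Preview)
Your proposal is correct and follows exactly the approach indicated in the paper, which simply states that the corollary follows by applying Theorem \ref{thm:lower} repeatedly. Your careful verification that the hypotheses of Theorem \ref{thm:lower} for $B_{i-1}$ with $a=a_i$ unfold to conditions (1) and (2) of the corollary at index $i$, together with the arity check $n-i+1\geq 3$, is precisely the bookkeeping the paper leaves implicit.
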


\section{From totally Hom-associative algebras to Hom-Nambu algebras}
\label{sec:homassnambu}

The main result of this section says that an $n$-ary totally Hom-associative algebra with equal twisting maps yields an $n$-ary Hom-Nambu algebra via the $n$-commutator bracket.

Let us first define the $n$-commutator words, which generalize the two terms in the usual commutator bracket.

\begin{definition}
\label{def:ncommutator}
Let $X_1,X_2,\ldots$ be non-commuting variables.  For $n \geq 2$ define the set $W_n$ of \textbf{$n$-commutator words} inductively as
\[
W_2 = \{X_1X_2, -X_2X_1\}
\]
and
\[
W_n = \{zX_n, -X_nz \colon z \in W_{n-1}\}
\]
for $n > 2$.
\end{definition}

For example,
\[
W_3 = \{X_1X_2X_3, -X_2X_1X_3, -X_3X_1X_2, X_3X_2X_1\}
\]
and $W_4$ consists of the $4$-commutator words
\[
\begin{split}
& X_1X_2X_3X_4, -X_2X_1X_3X_4, -X_3X_1X_2X_4, X_3X_2X_1X_4,\\
& -X_4X_1X_2X_3, X_4X_2X_1X_3, X_4X_3X_1X_2, -X_4X_3X_2X_1.
\end{split}
\]
In general, $W_n$ consists of $2^{n-1}$ $n$-commutator words.  Every $n$-commutator word gives a self-map on the $n$-fold tensor product as follows.

\begin{definition}
\label{def:nmap}
Let $V$ be a $\bk$-module and $w = \pm X_{i_1}\cdots X_{i_n} \in W_n$.  Define the map $w \colon V^{\otimes n} \to V^{\otimes n}$ by
\[
w(v_1,\ldots,v_n) = \pm v_{i_1} \otimes \cdots \otimes v_{i_n}
\]
for all $v_j \in V$.
\end{definition}

Using these maps defined by the $n$-commutator words, we can now define the $n$-commutator bracket.

\begin{definition}
\label{def:nbracket}
Let $(A,\nproduct,\alpha)$ be an $n$-ary Hom-algebra.  Define the \textbf{$n$-commutator bracket} $\bracket \colon A^{\otimes n} \to A^{\otimes n}$ by
\begin{equation}
\label{ncomm}
[a_1,\ldots,a_n] = \sum_{w \in W_n} \left(w(a_1,\ldots,a_n)\right)
\end{equation}
for all $a_j \in A$.
\end{definition}

For example, the $2$-commutator bracket is the usual commutator bracket:
\[
[a_1,a_2] = (a_1a_2) - (a_2a_1).
\]
The $3$-commutator bracket is the sum
\[
[a_{1,3}] = (a_1a_2a_3) - (a_2a_1a_3) - (a_3a_1a_2) + (a_3a_2a_1),
\]
which was first used in Corollary 4.3 in \cite{yau13}.

Let us now recall the definition of an $n$-ary Hom-Nambu algebra from \cite{ams}.

\begin{definition}
\label{def:homnambulie}
Let $(V,\bracket,\alpha)$ be an $n$-ary Hom-algebra.
\begin{enumerate}
\item
The \textbf{$n$-ary Hom-Jacobian} of $V$ is the $(2n-1)$-linear map $J^n_V \colon V^{\otimes 2n-1} \to V$ defined as (using the shorthand in \eqref{xij})
\begin{equation}
\label{homjacobian}
\begin{split}
J^n_V(x_{1,n-1};y_{1,n})
&= [\alpha_{1,n-1}(x_{1,n-1}), [y_{1,n}]]\\
&\relphantom{} - \sum_{i=1}^n \, [\alpha_{1,i-1}(y_{1,i-1}), [x_{1,n-1},y_i], \alpha_{i,n-1}(y_{i+1,n})]
\end{split}
\end{equation}
for $x_1, \ldots, x_{n-1}, y_1, \ldots , y_n \in V$.
\item
An \textbf{$n$-ary Hom-Nambu algebra} is an $n$-ary Hom-algebra $V$ that satisfies
\[
J^n_V = 0,
\]
called  the \textbf{$n$-ary Hom-Nambu identity}.
\end{enumerate}
\end{definition}

When the twisting maps are all equal to the identity map, $n$-ary Hom-Nambu algebras are the usual \textbf{$n$-ary Nambu algebras} \cite{ai,filippov,nambu,plet}.  In this case, the $n$-ary Hom-Jacobian $J^n_V$ is called the \textbf{$n$-ary Jacobian}, and the $n$-ary Hom-Nambu identity $J^n_V = 0$ is called the \textbf{$n$-ary Nambu identity}.


We are now ready for the main result of this section.

\begin{theorem}
\label{thm:commutator}
Let $(A,\nproduct,\alpha)$ be an $n$-ary totally Hom-associative algebra with equal twisting maps.  Then
\[
N(A) = (A,\bracket,\alpha)
\]
is an $n$-ary Hom-Nambu algebra, where $\bracket$ is the $n$-commutator bracket in \eqref{ncomm}.  Moreover, if $A$ is multiplicative, then so is $N(A)$.
\end{theorem}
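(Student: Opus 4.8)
The plan is to verify the two defining conditions of an $n$-ary Hom-Nambu algebra separately, disposing of the multiplicativity claim first since it is immediate. Writing $P$ for the $n$-ary product of $A$ and recalling that each $n$-commutator word sends $(a_1,\dots,a_n)$ to $\pm P(a_{i_1},\dots,a_{i_n})$ (Definitions \ref{def:nmap} and \ref{def:nbracket}), multiplicativity of $A$ gives $\alpha\bigl(P(a_{i_1},\dots,a_{i_n})\bigr)=P(\alpha a_{i_1},\dots,\alpha a_{i_n})$ for each word; summing over $W_n$ yields $\alpha[a_1,\dots,a_n]=[\alpha a_1,\dots,\alpha a_n]$, which is condition (ii) of multiplicativity (condition (i) holds since all twisting maps of $N(A)$ equal $\alpha$, and $N(A)$ is visibly an $n$-ary Hom-algebra). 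The real content is the $n$-ary Hom-Nambu identity $J^n_{N(A)}=0$.

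To organize that identity I would first record a cleaner combinatorial model of $W_n$. Unwinding the recursion in Definition \ref{def:ncommutator}, a word is determined by a ``left set'' $S\subseteq\{2,\dots,n\}$: it is the product in which the variables indexed by $S$ stand to the left of $X_1$ in decreasing order and those indexed by the complement stand to its right in increasing order, carrying the sign $(-1)^{|S|}$; in particular the slot containing $X_n$ lands at the far left (if $n\in S$) or the far right (if $n\notin S$). Every monomial occurring in $J^n_{N(A)}$ is then a product of two $n$-ary products --- an outer bracket applied to an inner bracket --- in which all outer entries carry the twisting map $\alpha$. The key device is that total Hom-associativity with equal twisting maps (Definition \ref{def:nhomass}) lets me slide the nested block past these $\alpha$-decorated outer entries; sliding it to the leftmost outer slot puts every such monomial into the canonical normal form $P\bigl(P(b_1,\dots,b_n),\alpha(c_1),\dots,\alpha(c_{n-1})\bigr)$, with the inner block undecorated and the remaining $n-1$ outer entries each decorated by $\alpha$. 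I would isolate this as a normalization lemma, valid because each intermediate configuration matches the shape $(\alpha(\cdots),P(\cdots),\alpha(\cdots))$ to which the associators $as^i_A$ apply; note that only equality of the twisting maps, not multiplicativity, is used here.

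With normalization in hand the argument becomes bookkeeping. Expanding $[\alpha(x_{1,n-1}),[y_{1,n}]]$ over pairs of left sets produces $2^{2n-2}$ signed monomials which, after normalizing, split into two families according to whether the inner block holds all of $x_1,\dots,x_{n-1}$ together with one $y$, or holds all of $y_1,\dots,y_n$. Expanding $\sum_{i=1}^n[\alpha(y_{1,i-1}),[x_{1,n-1},y_i],\alpha(y_{i+1,n})]$ over triples (a choice of $i$ and two left sets) produces $n\cdot 2^{2n-2}$ signed monomials, so the identity involves $(n+1)2^{2n-2}$ terms, as claimed. After normalizing, it suffices to compare the coefficient of each canonical monomial on the two sides: since distinct ordered partitions $(b_1,\dots,b_n\,;\,c_1,\dots,c_{n-1})$ correspond to distinct flattened sequences, and total Hom-associativity only relates block-slidings of a fixed sequence, these canonical monomials are linearly independent in the free totally Hom-associative algebra, so coefficients may be matched formally there.

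The main obstacle is precisely this coefficient matching: I must produce a sign-preserving bijection between the normalized monomials of the first term and those of the sum, showing along the way that the mixed contributions in the sum (those whose normalized block contains some but not all of the $x$'s) cancel in pairs among the summands $i$, while the surviving contributions reproduce the two families from the first term with the correct signs $(-1)^{|S|+|T|}$. Controlling how the left-set signs, the index $i$, and the location at which the block boundary falls after sliding all interact is the delicate point; the cases $n=2$ (Proposition 1.6 of \cite{ms}) and $n=3$ (Corollary 4.3 of \cite{yau13}) are exactly those in which this bijection is small enough to be checked by writing out every term, and the general combinatorial matching is the systematic substitute for that direct computation.
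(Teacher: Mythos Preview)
Your strategy is sound in outline but you have not closed the loop. The normalization device is valid: with equal twisting maps, total Hom-associativity lets you slide the inner block to slot $1$, your left-set model of $W_n$ is correct, and so is your ``two families'' description of $[\alpha(x_{1,n-1}),[y_{1,n}]]$ (since the inner bracket occupies slot $n$ of the outer, the recursion forces it to the far left or far right before any sliding). But you explicitly stop at the point that matters, namely the sign-preserving bijection and the pairwise cancellation of the ``mixed'' terms among the summands $i$; without those, the argument is a programme rather than a proof. A minor additional wrinkle: your appeal to linear independence in the ``free totally Hom-associative algebra'' would itself need a definition and a normal-form theorem (what plays the role of $\alpha$ in the free object?), whereas the identity can be established term-by-term in $A$ once the matching is actually exhibited.

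The paper avoids the full normalize-then-match bookkeeping by a coarser but complete decomposition. It only peels off the last variable, using $[a_{1,n}] = \sum_{z\in W_{n-1}}\bigl((z(a_{1,n-1}),a_n) - (a_n,z(a_{1,n-1}))\bigr)$ on both the inner and the outer bracket, and treats the summand $i=n$ separately from $1\le i\le n-1$. This yields a short list of term-types (labelled $A_1,A_4,B_{n1},B_{n4}$ and $B_{i1},\dots,B_{i4}$ in Lemmas~\ref{lemABn} and~\ref{lemBi}), and the Hom-Jacobian collapses via six families of \emph{one-step} block-slides, each indexed by where $X_i$ sits in the $(n-1)$-commutator word $z$: at the left end, at the right end, or immediately after some $X_j$. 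Every cancellation is a single application of $as^k_A=0$ rather than a slide all the way to slot $1$. That list of six families is precisely the bijection you are looking for, packaged so that the signs are read off from the recursion instead of from accumulated $(-1)^{|S|+|T|}$ factors; completing your approach would amount to rediscovering these same pairings after first obscuring them by normalizing everything.
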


Since the $n$-commutator bracket does not involve the twisting maps, we have the following special case of Theorem \ref{thm:commutator}

\begin{corollary}
\label{cor:commutator}
Let $(A,\nproduct)$ be an $n$-ary totally associative algebra with equal twisting maps.  Then
\[
N(A) = (A,\bracket)
\]
is an $n$-ary Nambu algebra, where $\bracket$ is the $n$-commutator bracket.
\end{corollary}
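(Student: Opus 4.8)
Throughout, write $N(A)=(A,[\,,\dots,],\alpha)$ for the proposed structure. The plan is to dispose of the multiplicativity claim immediately and then spend the whole argument on the Hom-Nambu identity $J^n_{N(A)}=0$. For multiplicativity, observe that $N(A)$ already has a single twisting map $\alpha$; if $A$ is multiplicative then $\alpha$ commutes with the $n$-ary product, and since $[a_1,\dots,a_n]=\sum_{w\in W_n}(w(a_1,\dots,a_n))$ is a signed sum of products of permuted arguments, $\alpha$ commutes with each summand, so $\alpha[a_1,\dots,a_n]=[\alpha(a_1),\dots,\alpha(a_n)]$. Thus only the identity $J^n_{N(A)}=0$ remains.

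The central device is a canonical form for nested products supplied by total Hom-associativity, and this is exactly where the hypothesis of equal twisting maps is used. For a sequence $\vec a=(a_1,\dots,a_{2n-1})$ and each $j\in\{1,\dots,n\}$, set
$$P_j(\vec a)=\bigl(\alpha(a_{1,j-1}),\,(a_{j,j+n-1}),\,\alpha(a_{j+n,2n-1})\bigr),$$
the $n$-ary product of $n$ entries in which the inner $n$-ary product sits in slot $j$ and every entry outside this window carries exactly one $\alpha$. With equal twisting maps, the identities $as^i_A=0$ for $i\in\{1,\dots,n-1\}$ assert precisely that $P_i(\vec a)=P_{i+1}(\vec a)$, so all the $P_j(\vec a)$ coincide; denote their common value by $\Phi(\vec a)$. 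The essential point is that $\Phi(\vec a)$ depends only on the ordered sequence $\vec a$, not on where the inner window is placed.

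Next I would expand $J^n_{N(A)}(x_{1,n-1};y_{1,n})$ completely into its $2^{2n-2}(n+1)$ atomic summands. Each summand is a signed nested product of the $2n-1$ base elements $x_1,\dots,x_{n-1},y_1,\dots,y_n$: in $[\alpha(x_{1,n-1}),[y_{1,n}]]$ the inner product groups the untwisted $y$'s while the remaining slots carry $\alpha(x)$'s, and in each summand of $\sum_i[\alpha(y_{1,i-1}),[x_{1,n-1},y_i],\alpha(y_{i+1,n})]$ the inner product groups the untwisted $x$'s together with $y_i$ while the remaining slots carry $\alpha(y_j)$, $j\ne i$. In every atomic summand the untwisted entries are exactly those inside the inner window and the $\alpha$-twisted entries are exactly those outside it, so each summand has the form $\pm P_j(\vec a)=\pm\Phi(\vec a)$ for the ordering $\vec a$ read off from the flattened product. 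Hence $J^n_{N(A)}=\sum_{s\in S}\epsilon_s\,\Phi(\vec a_s)$ over a finite set $S$ of atomic terms, and it suffices to produce a fixed-point-free, sign-reversing involution on $S$ that preserves the ordering $\vec a_s$: then paired terms are $\pm\Phi$ of the \emph{same} ordering, equal by the canonical-form reduction, and cancel.

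Constructing this involution is the main obstacle. Two facts about $W_n$ drive it. First, $X_n$ occupies an \emph{end} of every word in $W_n$; applied to the inner bracket $[y_{1,n}]$, this forces the whole $y$-block to an extreme of $\vec a$ (so the $x$'s all lie on one side), and applied to the inner bracket $[x_{1,n-1},y_i]$ it forces $y_i$ to sit at one end of the inner block, so the $x$'s are always consecutive with $y_i$ adjacent. Second, parametrizing each word by its sequence of front/back insertions of $X_2,\dots,X_n$ gives its sign as $(-1)^{f}$, where $f$ is the number of front insertions; this is what one tracks through the involution. I would then pair as follows. Inside the double sum, any term whose $x$-block is interior has a $y$ on each side of it, and the two choices of which adjoining $y$ plays the role of $y_i$ in the inner bracket produce two terms with identical ordering $\vec a$ and opposite sign, which cancel (the $n=2$ shadow of this is the pair of opposite-sign copies of $\Phi(y_1,x,y_2)$). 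The terms surviving this internal cancellation are exactly those with the $x$-block flush against one end; these I would match bijectively, with matching signs, against the expansion of $[\alpha(x_{1,n-1}),[y_{1,n}]]$ (whose $y$-block is likewise flush against an end), and the overall minus sign before the double sum makes each matched pair cancel. Verifying that both pairings are genuinely sign-reversing for all $n$, i.e. carrying the front/back sign bookkeeping through the two types of pairs, is the one laborious step; the reduction to $\Phi(\vec a)$ and the multiplicativity assertion are routine.
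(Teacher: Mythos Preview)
The paper disposes of this Corollary in one sentence: it is the special case $\alpha=\mathrm{Id}$ of the preceding Theorem, since the $n$-commutator bracket does not involve the twisting maps. Your write-up instead targets the full Theorem with general $\alpha$, which is fine but more than is being asked.

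For that Theorem your strategy---reduce every atomic summand to a canonical nested product $\Phi(\vec a)$ via total Hom-associativity and then exhibit a sign-reversing, $\vec a$-preserving involution---is exactly the paper's strategy. The difference is in execution. The paper first splits each bracket using $W_n=\{zX_n,\,-X_nz:z\in W_{n-1}\}$, producing labelled sums $A_1,A_4,B_{n1},B_{n4}$ and $B_{i1},\dots,B_{i4}$ for $i<n$, and then lists six concrete cancellation identities, each indexed by a structural constraint on the outer word $z\in W_{n-1}$ (which variable is leftmost, which is rightmost, which pair $X_jX_i$ is adjacent). Your ``type B'' matches the paper's $A_1\!-\!B_{i1}$ and $A_4\!-\!B_{i4}$ cancellations; your ``type A'' matches the remaining four. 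Because both members of each of the paper's pairs carry the \emph{same} $z$ and $z'$, well-definedness of the pairing and the sign comparison are immediate.

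Your version describes the pairing via the flattened ordering $\vec a$ and then explicitly postpones the sign check as ``the one laborious step.'' That step is not peripheral bookkeeping; it is the whole proof. Without fixing a parametrization such as the paper's same-$z$, same-$z'$ device, it is not even clear that the partner you describe always exists in the expansion, since $W_n$ contains only $2^{n-1}$ of the $n!$ orderings of the outer arguments. So the plan is correct, but the argument is incomplete until you either carry out the deferred verification in full or adopt the paper's word-based indexing, which makes both existence of the partner and the sign reversal transparent.
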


For the proof of Theorem \ref{thm:commutator}, we need to prove the $n$-ary Hom-Nambu identity $J^n_{N(A)} = 0$.  In the following Lemmas, we first compute the various terms in the $n$-ary Hom-Jacobian $J^n_{N(A)}$ \eqref{homjacobian}.  The $n$ terms in the sum in \eqref{homjacobian} are considered in two cases, $i=n$ (Lemma \ref{lemBn}) and $1 \leq i \leq n-1$ (Lemma \ref{lemBi}).

Let us compute the first term in the $n$-ary Hom-Jacobian $J^n_{N(A)}$.

\begin{lemma}
\label{lemA}
With the hypotheses of Theorem \ref{thm:commutator}, we have
\begin{equation}
\label{A}
\begin{split}
[\alpha(x_{1,n-1}),[y_{1,n}]]
&= \sum_{z,z' \in W_{n-1}} \left(z'(\alpha(x_{1,n-1})), (z(y_{1,n-1}), y_n)\right)\\
&\relphantom{} - \sum_{z,z' \in W_{n-1}} \left(z'(\alpha(x_{1,n-1})), (y_n, z(y_{1,n-1}))\right)\\
&\relphantom{} - \sum_{z,z' \in W_{n-1}} \left((z(y_{1,n-1}), y_n), z'(\alpha(x_{1,n-1}))\right)\\
&\relphantom{} + \sum_{z,z' \in W_{n-1}} \left((y_n, z(y_{1,n-1})), z'(\alpha(x_{1,n-1}))\right)
\end{split}
\end{equation}
for all $x_j, y_l \in A$.
\end{lemma}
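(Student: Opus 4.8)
The plan is to prove \eqref{A} directly from the recursive definition of the $n$-commutator words in Definition \ref{def:ncommutator}, using nothing more than the multilinearity of the $n$-ary product. Notably, no appeal to total Hom-associativity is needed for this particular identity; the only role of the equal-twisting-maps hypothesis of Theorem \ref{thm:commutator} is notational, letting us write the common twisting map as $\alpha$ so that $\alpha_{1,n-1}(x_{1,n-1}) = \alpha(x_{1,n-1})$ appears on the left-hand side.

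The key observation is that the recursion $W_n = \{zX_n,\, -X_nz \colon z \in W_{n-1}\}$ says precisely that, in every $n$-commutator word, the variable $X_n$ occurs either at the very end (with the sign inherited from $z$) or at the very front (with the opposite sign). Combined with Definition \ref{def:nmap}, this gives, for any elements $b_1,\ldots,b_n$ of an $n$-ary Hom-algebra,
\[
[b_1,\ldots,b_n] = \sum_{z \in W_{n-1}} \left(z(b_{1,n-1}),\, b_n\right) - \sum_{z \in W_{n-1}} \left(b_n,\, z(b_{1,n-1})\right),
\]
where $z(b_{1,n-1})$ denotes the signed tensor obtained by applying the $(n-1)$-commutator word $z$ to the first $n-1$ arguments. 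This single splitting is the engine of the whole computation.

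First I would apply this splitting to the outer bracket $[\alpha(x_{1,n-1}),[y_{1,n}]]$, taking $b_j = \alpha(x_j)$ for $j \le n-1$ and $b_n = [y_{1,n}]$. This expresses the left-hand side of \eqref{A} as a sum over $z' \in W_{n-1}$ of two families of $n$-ary products: those in which the inner bracket $[y_{1,n}]$ occupies the last slot, and those (with an overall minus sign) in which it occupies the first slot. Next I would apply the \emph{same} splitting to the inner bracket $[y_{1,n}]$, now with $y_n$ in the role of the last variable, writing it as $\sum_{z \in W_{n-1}}\left(z(y_{1,n-1}),y_n\right) - \sum_{z \in W_{n-1}}\left(y_n, z(y_{1,n-1})\right)$. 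Substituting this into the two outer families and using multilinearity of the $n$-ary product to distribute each summand into its slot yields exactly the four double sums over $z,z' \in W_{n-1}$ displayed in \eqref{A}.

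The only point requiring care is the bookkeeping of signs through the two nested applications of the recursion: the minus sign attached to the ``front'' words in each splitting must be tracked so that the four sign combinations (inner bracket and $y_n$ at the back or the front) produce the $+,-,-,+$ pattern of the four lines of \eqref{A}. Since each $z(b_{1,n-1})$ already carries its own sign, this reduces to checking that the product of the outer front-versus-back sign with the inner one is correct in each of the four cases, which is routine. Thus the expected ``hard part'' is purely clerical; there is no genuine algebraic obstacle, as \eqref{A} is a formal consequence of Definitions \ref{def:ncommutator} and \ref{def:nmap} together with bilinearity.
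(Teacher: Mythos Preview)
Your proposal is correct and follows essentially the same route as the paper: derive the splitting $[a_{1,n}] = \sum_{z \in W_{n-1}} (z(a_{1,n-1}),a_n) - (a_n,z(a_{1,n-1}))$ from the recursion for $W_n$, and apply it once to the outer bracket and once to the inner bracket to obtain the four double sums. The paper likewise remarks that total Hom-associativity is not used here.
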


\begin{proof}
From the definitions of the $n$-commutator bracket \eqref{ncomm} and the $n$-commutator words (Definition \ref{def:ncommutator}), we have
\begin{equation}
\label{bracket}
\begin{split}
[a_{1,n}] &= \sum_{w \in W_n} (w(a_{1,n}))\\
&= \sum_{z \in W_{n-1}} (z(a_{1,n-1}), a_n) - (a_n, z(a_{1,n-1})).
\end{split}
\end{equation}
for all $a_j \in A$.  The Lemma is obtained by using \eqref{bracket} on both $[y_{1,n}]$ and $[\alpha(x_{1,n-1}),\cdot]$.
\end{proof}

In Lemma \ref{lemA} we did not use the total Hom-associativity of $A$.  In the next result, we compute the $i=n$ term in the sum in the $n$-ary Hom-Jacobian $J^n_{N(A)}$.

\begin{lemma}
\label{lemBn}
With the hypotheses of Theorem \ref{thm:commutator}, we have
\begin{equation}
\label{Bn}
\begin{split}
-[\alpha(y_{1,n-1}), [x_{1,n-1},y_n]]
&= -\sum_{z,z' \in W_{n-1}} \left(z(\alpha(y_{1,n-1})), (z'(x_{1,n-1}), y_n)\right)\\
& \relphantom{} +\sum_{z,z' \in W_{n-1}} \left((z(y_{1,n-1}), y_n), z'(\alpha(x_{1,n-1}))\right)\\
&\relphantom{} + \sum_{z,z' \in W_{n-1}} \left(z'(\alpha(x_{1,n-1})), (y_n, z(y_{1,n-1}))\right)\\
&\relphantom{} - \sum_{z,z' \in W_{n-1}} \left((y_n, z'(x_{1,n-1})), z(\alpha(y_{1,n-1}))\right)
\end{split}
\end{equation}
for all $x_j, y_l \in A$.
\end{lemma}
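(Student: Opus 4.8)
The plan is to follow the pattern of Lemma \ref{lemA}, expanding both brackets by \eqref{bracket} and then invoking the total Hom-associativity of $A$ to recombine two of the resulting families. First I would apply \eqref{bracket} to the outer bracket in $-[\alpha(y_{1,n-1}), [x_{1,n-1},y_n]]$, taking the last argument to be $a_n = [x_{1,n-1},y_n]$; this produces, for each $z \in W_{n-1}$, one term with $z(\alpha(y_{1,n-1}))$ to the left of the inner bracket and one, with opposite sign, with it to the right. Next I would expand the inner bracket $[x_{1,n-1},y_n]$ itself by \eqref{bracket}, with $a_{1,n-1}=x_{1,n-1}$ and $a_n = y_n$, introducing a second sum over $z' \in W_{n-1}$. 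Multiplying out gives four families of nested $n$-ary products indexed by $z,z' \in W_{n-1}$, carrying the signs recorded in \eqref{Bn}.

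Two of these four families already agree with the asserted right-hand side of \eqref{Bn} with no further work: the family $-\sum (z(\alpha(y_{1,n-1})), (z'(x_{1,n-1}), y_n))$ is the first line, and the family $-\sum ((y_n, z'(x_{1,n-1})), z(\alpha(y_{1,n-1})))$ is the last line. The two remaining families are where total Hom-associativity enters. Consider $\sum (z(\alpha(y_{1,n-1})), (y_n, z'(x_{1,n-1})))$. For fixed $z,z'$ each summand is, up to sign, a nested product of $2n-1$ elements in which the inner $n$-ary product occupies the last $n$ slots while the first $n-1$ slots carry $\alpha$. Since the twisting maps are equal, the vanishing of all the Hom-associators yields a single chain $as^1_A = \cdots = as^{n-1}_A = 0$, which says exactly that the inner product may be transported through all $n$ admissible positions, the elements lying outside it acquiring $\alpha$ and the overall sign being preserved. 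Sliding the inner product from the last $n$ slots to the first $n$ slots converts this family into $\sum ((z(y_{1,n-1}), y_n), z'(\alpha(x_{1,n-1})))$, the second line of \eqref{Bn}; the mirror-image slide, moving the inner product from the first $n$ slots to the last $n$ slots, turns the remaining family $\sum ((z'(x_{1,n-1}), y_n), z(\alpha(y_{1,n-1})))$ into the third line $\sum (z'(\alpha(x_{1,n-1})), (y_n, z(y_{1,n-1})))$. Collecting the four contributions gives \eqref{Bn}.

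The one point I would check with care is the bookkeeping in this transport step, which is the only real obstacle. As the inner product slides across the $n$ positions, the distinguished element $y_n$ passes from one end of the inner product to the other, the $\alpha$-twist migrates from the $y$-variables onto the $x$-variables (and conversely in the mirror slide), and the signs of the two commutator words $z,z'$ must be carried along unchanged. Because $z$ and $z'$ both range over all of $W_{n-1}$, any relabeling of the summation indices forced by this transport is harmless, so the identity \eqref{Bn} follows.
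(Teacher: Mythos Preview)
Your proposal is correct and follows exactly the paper's approach: expand both brackets by \eqref{bracket} to obtain the four sums of \eqref{Bn'}, observe that the first and fourth already match \eqref{Bn}, and use total Hom-associativity (sliding the inner $n$-ary product across all positions) to convert the second and third sums into the corresponding lines of \eqref{Bn}. Your remark about possible relabeling of $z,z'$ is unnecessarily cautious---the transport is termwise and no reindexing is needed---but this does no harm.
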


\begin{proof}
Using \eqref{bracket} we have
\begin{equation}
\label{Bn'}
\begin{split}
-[\alpha(y_{1,n-1}), [x_{1,n-1},y_n]]
&= -\sum_{z,z' \in W_{n-1}} \left(z(\alpha(y_{1,n-1})), (z'(x_{1,n-1}), y_n)\right)\\
& \relphantom{} +\sum_{z,z' \in W_{n-1}} \left(z(\alpha(y_{1,n-1})), (y_n, z'(x_{1,n-1}))\right)\\
&\relphantom{} + \sum_{z,z' \in W_{n-1}} \left((z'(x_{1,n-1}), y_n), z(\alpha(y_{1,n-1}))\right)\\
&\relphantom{} - \sum_{z,z' \in W_{n-1}} \left((y_n, z'(x_{1,n-1})), z(\alpha(y_{1,n-1}))\right).
\end{split}
\end{equation}
The first (resp., fourth) sums in \eqref{Bn} and \eqref{Bn'} are equal.  The second (resp., third) sums in \eqref{Bn} and \eqref{Bn'} are equal by the total Hom-associativity of $A$.
\end{proof}

Combining Lemmas \ref{lemA} and \ref{lemBn}, we obtain the following result.

\begin{lemma}
\label{lemABn}
With the hypotheses of Theorem \ref{thm:commutator}, we have
\[
\begin{split}
[\alpha(x_{1,n-1}),[y_{1,n}]] & - [\alpha(y_{1,n-1}), [x_{1,n-1},y_n]]\\
&= \sum_{z,z' \in W_{n-1}} \underbrace{\left(z'(\alpha(x_{1,n-1})), (z(y_{1,n-1}), y_n)\right)}_{A_1}\\
&\relphantom{} + \sum_{z,z' \in W_{n-1}} \underbrace{\left((y_n, z(y_{1,n-1})), z'(\alpha(x_{1,n-1}))\right)}_{A_4}\\
&\relphantom{} - \sum_{z,z' \in W_{n-1}} \underbrace{\left(z(\alpha(y_{1,n-1})), (z'(x_{1,n-1}), y_n)\right)}_{B_{n1}}\\
&\relphantom{} - \sum_{z,z' \in W_{n-1}} \underbrace{\left((y_n, z'(x_{1,n-1})), z(\alpha(y_{1,n-1}))\right)}_{B_{n4}}
\end{split}
\]
for all $x_j, y_l \in A$.
\end{lemma}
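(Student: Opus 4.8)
The plan is to prove Lemma \ref{lemABn} by nothing more than adding the two expressions already established in Lemmas \ref{lemA} and \ref{lemBn} and observing that two pairs of sums cancel. Label the four sums on the right-hand side of \eqref{A} as $A_1,A_2,A_3,A_4$ in the order displayed, and the four sums on the right-hand side of \eqref{Bn} as $B_{n1},B_{n2},B_{n3},B_{n4}$, again in the order displayed. Then the quantity to be computed is
\[
[\alpha(x_{1,n-1}),[y_{1,n}]] - [\alpha(y_{1,n-1}),[x_{1,n-1},y_n]] = (A_1+A_2+A_3+A_4)+(B_{n1}+B_{n2}+B_{n3}+B_{n4}).
\]

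First I would cancel $A_2$ against $B_{n3}$. The summand of $A_2$ is $(z'(\alpha(x_{1,n-1})),(y_n,z(y_{1,n-1})))$ carried with an overall minus sign, while $B_{n3}$ is the identical sum, indexed by the same pairs $(z,z')$ with $z,z'\in W_{n-1}$ and having identical tensor structure, carried with a plus sign; hence they cancel term by term and $A_2+B_{n3}=0$. Next I would cancel $A_3$ against $B_{n2}$ in the same way: $A_3$ is $-\sum((z(y_{1,n-1}),y_n),z'(\alpha(x_{1,n-1})))$ and $B_{n2}$ is the same sum with a plus sign, so $A_3+B_{n2}=0$.

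What survives is exactly $A_1+A_4+B_{n1}+B_{n4}$, and these four signed sums are verbatim the four sums displayed in the statement of Lemma \ref{lemABn}, with $A_1$ and $A_4$ entering positively and $B_{n1}$ and $B_{n4}$ entering negatively (the latter two already carrying minus signs in \eqref{Bn}). This completes the proof.

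Since the argument is pure bookkeeping, there is no substantive obstacle; the only care required is to match the cancelling pairs by their sign and tensor shape rather than by their position in the list. It is worth remarking that the two middle sums in each of Lemmas \ref{lemA} and \ref{lemBn} were recorded precisely so that they could be eliminated here, and that no further appeal to total Hom-associativity is needed at this stage, that property having already been absorbed into the proof of Lemma \ref{lemBn}.
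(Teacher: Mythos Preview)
Your proof is correct and follows essentially the same approach as the paper's own proof, which simply notes that the second (resp.\ third) sum in \eqref{A} cancels the third (resp.\ second) sum in \eqref{Bn}; you have spelled out exactly these two cancellations $A_2+B_{n3}=0$ and $A_3+B_{n2}=0$. The only minor point is a slight ambiguity in your labeling convention (whether $B_{n1}$, etc., denote the signed sums or just the summands), but you resolve this in your final parenthetical remark, and in the paper's statement the underbraced labels $B_{n1},B_{n4}$ refer to the summands without the sign.
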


\begin{proof}
This follows from Lemmas \ref{lemA} and \ref{lemBn} because the second (resp., third) sum in \eqref{A} is equal to the third (resp., second) sum in \eqref{Bn} with the opposite sign.
\end{proof}

The labels $A_1$, $A_4$, $B_{n1}$, and $B_{n4}$ will be used below.

Next we compute the $i$th term ($1 \leq i \leq n-1$) in the sum in the $n$-ary Hom-Jacobian $J^n_{N(A)}$.

\begin{lemma}
\label{lemBi}
With the hypotheses of Theorem \ref{thm:commutator}, for $i \in \{1,\ldots,n-1\}$ we have
\[
\begin{split}
- & [\alpha(y_{1,i-1}), [x_{1,n-1},y_i], \alpha(y_{i+1,n})]\\
&= - \sum_{z,z' \in W_{n-1}} \underbrace{\left(z(\alpha(y_{1,i-1}), (z'(x_{1,n-1}), y_i), \alpha(y_{i+1,n-1})), \alpha(y_n)\right)}_{B_{i1}}\\
&\relphantom{} + \sum_{z,z' \in W_{n-1}} \underbrace{\left(\alpha(y_n), z(\alpha(y_{1,i-1}), (z'(x_{1,n-1}), y_i), \alpha(y_{i+1,n-1}))\right)}_{B_{i2}}\\
&\relphantom{} + \sum_{z,z' \in W_{n-1}} \underbrace{\left(z(\alpha(y_{1,i-1}), (y_i, z'(x_{1,n-1})), \alpha(y_{i+1,n-1})), \alpha(y_n)\right)}_{B_{i3}}\\
&\relphantom{} - \sum_{z,z' \in W_{n-1}} \underbrace{\left(\alpha(y_n), z(\alpha(y_{1,i-1}), (y_i, z'(x_{1,n-1})), \alpha(y_{i+1,n-1}))\right)}_{B_{i4}}
\end{split}
\]
for all $x_j, y_l \in A$.
\end{lemma}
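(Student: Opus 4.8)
The plan is to expand $-[\alpha(y_{1,i-1}), [x_{1,n-1},y_i], \alpha(y_{i+1,n})]$ by unfolding the $n$-commutator bracket defined in \eqref{ncomm} from the outside in, exactly as was done for the $i=n$ term in Lemma \ref{lemBn}, but now tracking where the inner bracket $[x_{1,n-1},y_i]$ sits among the surrounding arguments $\alpha(y_1),\ldots,\alpha(y_{i-1})$ and $\alpha(y_{i+1}),\ldots,\alpha(y_n)$. First I would use the decomposition \eqref{bracket} on the outer $n$-commutator bracket, splitting it into the $z \in W_{n-1}$ terms in which the last argument $\alpha(y_n)$ is peeled off to the right versus to the left. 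This produces the two families distinguished by whether $\alpha(y_n)$ appears on the far right (the $B_{i1}$, $B_{i3}$ sums) or the far left (the $B_{i2}$, $B_{i4}$ sums), matching the $\pm$ signs in \eqref{bracket}.

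Next I would expand the remaining inner slot $[x_{1,n-1},y_i]$ using \eqref{bracket} once more. This is the genuinely $n$-ary feature: the bracket $[x_{1,n-1},y_i]$ is itself an $n$-fold commutator in the variables $x_1,\ldots,x_{n-1},y_i$, so by \eqref{bracket} it equals $\sum_{z' \in W_{n-1}}(z'(x_{1,n-1}), y_i) - (y_i, z'(x_{1,n-1}))$. The term with $(z'(x_{1,n-1}), y_i)$ feeds the $B_{i1}$ and $B_{i2}$ sums, while the term with $(y_i, z'(x_{1,n-1}))$ feeds the $B_{i3}$ and $B_{i4}$ sums, with the expected sign flip from the minus in \eqref{bracket}. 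After this double expansion, each of the $2^{n-1}$ outer words $z$ and $2^{n-1}$ inner words $z'$ produces one term in each of the four labelled families, and the arguments of the outer $n$-ary product are precisely $z$ applied to the sequence $\bigl(\alpha(y_{1,i-1}),\,\star,\,\alpha(y_{i+1,n-1})\bigr)$ with the inner product $\star$ inserted in the $i$th position, together with $\alpha(y_n)$ or $\pm\alpha(y_n)$ peeled to one side.

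The main obstacle, and the step deserving the most care, is bookkeeping the positions and signs so that the inner product $(z'(x_{1,n-1}),y_i)$ lands in exactly the $i$th slot of the outer product after the word $z$ permutes the $y$-arguments. Because $z \in W_{n-1}$ acts on the $n-1$ arguments $\bigl(\alpha(y_1),\ldots,\alpha(y_{i-1}),\star,\alpha(y_{i+1}),\ldots,\alpha(y_{n-1})\bigr)$, I must verify that treating the inner product $\star$ as a single formal argument in position $i$ is consistent with Definition \ref{def:nmap}; this is legitimate because $z$ is defined purely combinatorially on an $(n-1)$-fold tensor factor and does not see the internal structure of its entries. The sign of each term is the product of the sign of $z$, the sign of $z'$, and the $\pm$ coming from the two applications of \eqref{bracket}, and collecting these yields precisely the signs $-,+,+,-$ attached to $B_{i1},B_{i2},B_{i3},B_{i4}$ in the statement. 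Notably, unlike Lemma \ref{lemBn}, this computation does not invoke total Hom-associativity at all; it is a pure unfolding of the bracket, and the associativity relations enter only later when the four families from all values of $i$ are combined against $A_1$ and $A_4$ from Lemma \ref{lemABn}.
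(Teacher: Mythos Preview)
Your proposal is correct and follows exactly the approach the paper uses: the paper's proof simply says ``Just use \eqref{bracket} twice, as in Lemma \ref{lemA},'' and that is precisely the two-stage expansion you describe, including your observation that total Hom-associativity is not invoked here.
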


\begin{proof}
Just use \eqref{bracket} twice, as in Lemma \ref{lemA}.
\end{proof}

Using Lemmas \ref{lemABn} and \ref{lemBi} we can now give the proof of Theorem \ref{thm:commutator}.

\begin{proof}[Proof of Theorem \ref{thm:commutator}]
Since the multiplicativity assertion is clear, it remains to establish the $n$-ary Hom-Nambu identity $J^n_{N(A)} = 0$.  By Lemmas \ref{lemABn} and \ref{lemBi} the $n$-ary Hom-Jacobian $J^n_{N(A)}$ is
\begin{equation}
\label{jna}
J^n_{N(A)} = \sum_{z,z'\in W_{n-1}} \left(A_1 + A_4 - B_{n1} - B_{n4} + \sum_{i=1}^{n-1}(-B_{i1} + B_{i2} + B_{i3} - B_{i4})\right).
\end{equation}
The total Hom-associativity of $A$ implies the following six types of cancellation.  For $i \in \{1,\ldots,n-1\}$ we have:
\[
\sum_{\substack{z' \in W_{n-1}\\ z = \pm X_i \cdots \in W_{n-1}}} (A_1 - B_{i1}) = 0,
\quad
\sum_{\substack{z' \in W_{n-1}\\ z = \pm \cdots X_i \in W_{n-1}}} (A_4 - B_{i4}) = 0,
\]
\[
\sum_{\substack{z' \in W_{n-1}\\ z = \pm \cdots X_i \in W_{n-1}}} (-B_{n1} + B_{i3}) = 0, \quad
\sum_{\substack{z' \in W_{n-1}\\ z = \pm X_i \cdots \in W_{n-1}}}  (-B_{n4} + B_{i2}) = 0.
\]
For $i \not= j$ in $\{1,\ldots,n-1\}$ we have:
\[
\sum_{\substack{z' \in W_{n-1}\\ z = \pm \cdots X_jX_i \cdots \in W_{n-1}}} (-B_{i1} + B_{j3}) = 0,\quad
\sum_{\substack{z' \in W_{n-1}\\ z = \pm \cdots X_jX_i \cdots \in W_{n-1}}} (B_{i2} - B_{j4}) = 0.
\]
Here $z = \pm X_i \cdots \in W_{n-1}$ (resp., $z = \pm \cdots X_i \in W_{n-1}$) means that $z$ is an $(n-1)$-commutator word starting (resp., ending) with $X_i$.  Likewise, $z = \pm \cdots X_jX_i \cdots \in W_{n-1}$ means that $z$ is an $(n-1)$-commutator word in which $X_i$ is immediately preceded by $X_j$.  Using the expression \eqref{jna} for $J^n_{N(A)}$, these six types of cancellation imply that $N(A)$ satisfies the $n$-ary Hom-Nambu identity $J^n_{N(A)} = 0$.
\end{proof}


\end{document}